\documentclass[oneside,english]{amsart}
\usepackage[T1]{fontenc}
\usepackage[latin9]{inputenc}
\usepackage{geometry}
\geometry{verbose,tmargin=3cm,bmargin=3cm,lmargin=3cm,rmargin=3cm}
\usepackage{amsthm}
\usepackage{amssymb}
\usepackage{esint}
\usepackage[all]{xy}

\usepackage{enumerate}

\makeatletter
\numberwithin{equation}{section}
\numberwithin{figure}{section}
  \theoremstyle{plain}
  \newtheorem*{thm*}{\protect\theoremname}
\theoremstyle{plain}
\newtheorem{thm}{\protect\theoremname}
  \theoremstyle{plain}
  \newtheorem{lem}[thm]{\protect\lemmaname}
  \theoremstyle{definition}
  \newtheorem{defn}[thm]{\protect\definitionname}
  \theoremstyle{plain}
  \newtheorem{prop}[thm]{\protect\propositionname}
  \theoremstyle{remark}
  \newtheorem{rem}[thm]{\protect\remarkname}

\usepackage{shuffle}

\makeatother

  \providecommand{\definitionname}{Definition}
  \providecommand{\lemmaname}{Lemma}
  \providecommand{\propositionname}{Proposition}
  \providecommand{\remarkname}{Remark}
  \providecommand{\theoremname}{Theorem}
\providecommand{\theoremname}{Theorem}

\begin{document}
\author{Minoru Hirose}
\address[Minoru Hirose]{Faculty of Mathematics, Kyushu University, 744, Motooka, Nishi-ku, Fukuoka, 819-0395, Japan}
\email{m-hirose@math.kyushu-u.ac.jp}
\author{Nobuo Sato}
\address[Nobuo Sato]{Faculty of Mathematics, Kyushu University, 744, Motooka, Nishi-ku, Fukuoka, 819-0395, Japan}
\email{n-sato@math.kyushu-u.ac.jp}
\subjclass[2010]{Primary 05E40, 11M32, Secondary 13N15}

\title[Algebraic differential formulas of iterated integrals]{Algebraic differential formulas for the shuffle, stuffle and duality
relations of iterated integrals}

\date{\today}

\keywords{Shuffle product, Stuffle product, Linear fractional transformation,
Iterated integrals, Multiple zeta values}
\begin{abstract}
In this paper, we prove certain algebraic identities, which correspond
to differentiation of the shuffle relation, the stuffle relation,
and the relations which arise from M\"{o}bius transformations of
iterated integrals. These formulas provide fundamental and useful
tools in the study of iterated integrals on a punctured projective
line.
\end{abstract}
\maketitle

\section{Introduction}

Multiple zeta values (MZVs in short) are real numbers defined by the
convergent series
\[
\zeta(k_{1},\ldots,k_{d})=\sum_{0<m_{1}<\cdots<m_{d}}\frac{1}{m_{1}^{k_{1}}\cdots m_{d}^{k_{d}}}
\]
where $(k_{1},\ldots,k_{d})$ is a $d$-tuple of positive integers
such that $k_{d}>1$, and the sum is taken over all $d$-tuples of
positive integers satisfying the inequality. MZVs are, as obvious
from their definition, multiple series generalization of the Riemann
zeta function evaluated at positive integers.

Despite their simple and elementary appearance, the algebraic nature
of the space spanned by MZVs is surprisingly rich and mysterious.
In fact, MZVs arise in various areas of mathematics and particle physics,
such as in the Kontsevich integrals of knots (\cite{Le_Murakami_96},
\cite{Le_Murakami_95}, \cite{Le_Murakami_96_2}), or in the evaluation
of scattering amplitudes (\cite{amplitude_2013_1}, \cite{amplitude_2013_2}),
and are periods of mixed Tate motives over $\mathbb{Z}$ (\cite{Brown_MTMZ},\cite{Goncharov_Galois_symmetries}).

A particularly important aspect of MZVs is their iterated integral
representation due to Kontsevich, i.e.,
\begin{align*}
(-1)^{d}\zeta(k_{1},\dots,k_{d}) & =I(0;\overbrace{1,0,\ldots,0}^{k_{1}},\overbrace{1,0,\ldots,0}^{k_{2}},\ldots,\overbrace{1,0,\ldots,0}^{k_{d}};1)\\
 & \mbox{where } I(s;a_{1},\dots,a_{k};t):=\int_{s<t_{1}<\cdots<t_{k}<t}\frac{dt_{1}}{t_{1}-a_{1}}\wedge\cdots\wedge\frac{dt_{k}}{t_{k}-a_{k}},
\end{align*}
which (also occurs as the coefficients of the KZ-associator) provides
a geometric point of view on MZVs. To describe the algebraic structure
of the space spanned by such iterated integrals, Hoffman \cite{Hoffman_alg_setup}
introduced the non-commutative polynomial algebra generated by two
indeterminates $e_{0},e_{1}$ which correspond to the differential
one-forms $\frac{dt}{t}$ and $\frac{dt}{t-1}$, respectively. This
algebraic setup gives a fundamental tool to study MZVs. In \cite{HIST},
\cite{Confluence}, the authors consider more general iterated integrals
$I(0;a_{1},\dots,a_{k};1)$ with $a_{1},\dots,a_{k}\in\{0,1,z\},$
where $z$ is a complex variable. For this purpose, they extended
Hoffman's algebraic setup by adding another generator $e_{z}$ corresponding
to the one-form $\frac{dt}{t-z}$ to Hoffman's algebra.
More precisely, let $\mathcal{A}_{\{0,1,z\}}=\mathbb{Z}\left\langle e_{0},e_{1},e_{z}\right\rangle $
(resp. $\mathcal{A}_{\{0,1\}}=\mathbb{Z}\left\langle e_{0},e_{1}\right\rangle $)
be the non-commutative polynomial algebra over $\mathbb{Z}$ generated
by the indeterminates $e_{0},e_{1}$ and $e_{z}$ (resp. $e_{0}$
and $e_{1}$). Let $\mathcal{A}_{\{0,1,z\}}^{0}$ denote the subspace
of admissible elements
\[
\mathcal{A}_{\{0,1,z\}}^{0}:=\mathbb{Z}\oplus\mathbb{Z}e_{z}\oplus\bigoplus_{\substack{a\in\{1,z\}\\
b\in\{0,z\}
}
}e_{a}\mathcal{A}_{\{0,1,z\}}e_{b},
\]
$\mathcal{A}_{\{0,1\}}^{0}$ its subspace
\begin{align*}
\mathcal{A}_{\{0,1\}}^{0} & =\mathcal{A}_{\{0,1,z\}}^{0}\cap\mathcal{A}_{\{0,1\}}=\mathbb{Z}\oplus e_{1}\mathcal{A}_{\{0,1\}}e_{0}
\end{align*}
and $L$ a linear map on $\mathcal{A}_{\{0,1,z\}}^{0}$ defined by
\[
L(e_{a_{1}}\cdots e_{a_{n}})=\int_{0<t_{1}<\cdots<t_{n}<1}\prod_{j=1}^{n}\frac{dt_{j}}{t_{j}-a_{j}}.
\]
Then $L$ satisfies the following differential formula \cite{HIST}
\begin{equation}
\frac{d}{dz}L(w)=\sum_{c\in\{0,1\}}\frac{1}{z-c}L(\partial_{z,c}w),\label{eq:der_intro1}
\end{equation}
where $\partial_{\alpha,\beta}:\mathcal{A}_{\{0,1,z\}}\to\mathcal{A}_{\{0,1,z\}}$
is a linear map defined by
\begin{equation}
\partial_{\alpha,\beta}(e_{a_{1}}\cdots e_{a_{n}})=\sum_{i=1}^{n}\left(\delta_{\{a_{i},a_{i+1}\},\{\alpha,\beta\}}-\delta_{\{a_{i-1},a_{i}\},\{\alpha,\beta\}}\right)e_{a_{1}}\cdots\widehat{e_{a_{i}}}\cdots e_{a_{n}}\label{eq:der_intro}
\end{equation}
with $a_{0}=0$, $a_{n+1}=1$. Here, $\delta_{S,T}$ is the Kronecker
delta for two sets $S$ and $T$.
These $\partial_{\alpha,\beta}$'s are the algebraic counterpart
of the usual differentiation $d/dz$, and have fundamental importance
in the study of iterated integrals. For example, in \cite{HIST},
the authors proved a ``sum formula'' for iterated integrals, which
generalizes the classical sum formula for MZVs, by using its inductive
structure with respect to the algebraic differentiation. Also, in
\cite{Confluence}, the authors exploited the differential structure
of $\mathcal{A}_{\{0,1,z\}}^{0}$ to construct a class of (presumably
exhausting all the) relations among MZVs, which they called the confluence
relation. The purpose of this paper to investigate the relationships
between $\partial_{\alpha,\beta}$ and other basic algebraic operations.

We denote the shuffle product, the
stuffle product and the duality map by $\shuffle$, $*$ and $\tau_{z}$,
respectively (here, the precise definition of $*$ and $\tau_{z}$
will be explained later). Then, $L$ satisfies the ``shuffle relation''
\begin{equation}
L(u\shuffle v)=L(u)L(v),\label{eq:shuffle}
\end{equation}
the ``stuffle relation'' \cite[Section 5.2]{BBBL_stuffle}
\begin{equation}
L(u*v)=L(u)L(v),\label{eq:stuffle}
\end{equation}
and the ``duality relation'' \cite[Theorem 1.1]{HIST}
\begin{equation}
L(\tau_{z}(u))=L(u).\label{eq:duality}
\end{equation}
By differentiating the equalities (\ref{eq:shuffle}), (\ref{eq:stuffle})
and (\ref{eq:duality}) with respect to $z$ and applying (\ref{eq:der_intro1}),
we obtain
\begin{align}
\sum_{c\in\{0,1\}}\frac{1}{z-c}L(\partial_{z,c}(u\shuffle v)) & =\sum_{c\in\{0,1\}}\frac{1}{z-c}(L(\partial_{z,c}u)L(v)+L(u)L(\partial_{z,c}v)),\label{eq:evalated_by_L}\\
\sum_{c\in\{0,1\}}\frac{1}{z-c}L(\partial_{z,c}(u*v)) & =\sum_{c\in\{0,1\}}\frac{1}{z-c}L(u)L(\partial_{z,c}v),\\
\sum_{c\in\{0,1\}}\frac{1}{z-c}L(\partial_{z,c}\tau_{z}(u)) & =\sum_{c\in\{0,1\}}\frac{1}{z-c}L(\tau_{z}(\partial_{z,c}u)).
\end{align}
Therefore, it is natural to ask whether these equalities in complex
numbers lift to the equalities in $\mathcal{A}_{\{0,1,z\}}$.
In this paper we shall show that the answers to all these three questions are ``Yes''.
More precisely, we shall prove the following theorem.
\begin{thm*}
For $c\in\{0,1\}$,
\begin{align*}
\partial_{z,c}(u\shuffle v) & =(\partial_{z,c}u)\shuffle v+u\shuffle(\partial_{z,c}v)\qquad\left(u,v\in\mathcal{A}_{\{0,1,z\}}^{0}\right)\\
\partial_{z,c}(u*v) & =u*(\partial_{z,c}v)\qquad\qquad\;\,\left(u\in\mathcal{A}_{\{0,1\}}^{0},\, v\in\mathcal{A}_{\{0,1,z\}}^{0}\right)\\
\partial_{z,c}\tau_{z}(u) & =\tau_{z}(\partial_{z,c}u)\qquad\qquad\qquad\qquad\qquad\left(u\in\mathcal{A}_{\{0,1,z\}}^{0}\right).
\end{align*}

\end{thm*}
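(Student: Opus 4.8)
The plan is to first re-express $\partial_{z,c}$ in a form adapted to the three multiplicative/involutive structures, and then to run the same underlying mechanism — pair off the ``cross'' terms, reassemble the rest — in each case. The first thing I would do, and record as a lemma, is the following reorganization of \eqref{eq:der_intro}. Grouping the two summands by the gap lying between consecutive letters (with the conventions $a_0=0$, $a_{n+1}=1$) yields the equivalent description
\begin{equation*}
\partial_{z,c}(e_{a_1}\cdots e_{a_n})
=\sum_{\substack{1\le i\le n-1\\ \{a_i,a_{i+1}\}=\{z,c\}}}
e_{a_1}\cdots e_{a_{i-1}}\bigl(e_{a_{i+1}}-e_{a_i}\bigr)e_{a_{i+2}}\cdots e_{a_n}
-\delta_{c,0}\,\delta_{z,a_1}\,e_{a_2}\cdots e_{a_n}
+\delta_{c,1}\,\delta_{z,a_n}\,e_{a_1}\cdots e_{a_{n-1}}.
\end{equation*}
In words: $\partial_{z,c}$ contracts each \emph{interior} adjacent pair equal as a set to $\{z,c\}$ into the single letter ``right minus left'', and it adds two \emph{boundary} terms anchored at the fixed endpoints $0$ and $1$ (the first active only when $c=0$, the second only when $c=1$). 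All three identities will be proved by tracking these contractions.

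For the shuffle identity I would expand $u\shuffle v$ as the sum over all interleavings. Then the two letters bounding any gap in any interleaving come either both from $u$, both from $v$, or one from each. For a \emph{mixed} interior gap, pairing each interleaving with the one obtained by transposing the two adjacent letters of different origin is a fixed-point-free, sign-reversing involution: the prefix and suffix are unchanged and the two contractions ``right minus left'' are exact negatives, so all mixed interior contractions cancel. The same-origin interior contractions sum, by the interleaving description, to the interior contractions of $u$ shuffled with $v$ (and symmetrically for $v$), while the two boundary terms reassemble — according to whether the first (resp.\ last) letter of the interleaving comes from $u$ or from $v$ — into the boundary parts of $(\partial_{z,c}u)\shuffle v+u\shuffle(\partial_{z,c}v)$. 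Summing gives the first identity.

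For the stuffle identity I would induct on total length using the recursive definition of $*$, in which the leading harmonic blocks of $u$ and $v$ are either kept separate or \emph{merged} into a single block. Since $u\in\mathcal{A}^0_{\{0,1\}}$ carries no $e_z$, every $e_z$ appearing in $u*v$ — hence every gap that $\partial_{z,c}$ can contract — originates from $v$; in particular $\partial_{z,c}u=0$. Interior contractions confined to a $v$-block, together with the boundary terms, reassemble into $u*(\partial_{z,c}v)$ just as in the shuffle step. The one genuinely new phenomenon is the merge term, and showing that the contractions created at the junction of a ($z$-free) $u$-block with a $v$-block cancel among themselves is what I expect to be \emph{the main obstacle}. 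This is exactly where the hypothesis that $u$ is $z$-free is indispensable: it is what annihilates the would-be cross term $(\partial_{z,c}u)*v$ and distinguishes the stuffle formula from a naive Leibniz rule (which fails once $u$ contains $e_z$).

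Finally, for the duality identity I would write $\tau_z$ explicitly as reversal of words followed by the letter substitution induced by the underlying M\"obius transformation, and then verify the intertwining $\partial_{z,c}\tau_z=\tau_z\partial_{z,c}$ directly on the reformulated operator. Reversal interchanges the ``left gap'' and ``right gap'' roles, hence exchanges the two boundary anchors, while the substitution permutes the alphabet; the content of the proof is the finite check that, term by term, the transformation of the gap condition $\{a_i,a_{i+1}\}=\{z,c\}$ and of the two endpoint anchors is compatible with the ``right minus left'' contraction. Unlike the stuffle step this requires no combinatorial cancellation, only careful bookkeeping of how $\tau_z$ acts on the data in the reformulation, so I expect it to be the most routine of the three once the explicit form of $\tau_z$ is in hand.
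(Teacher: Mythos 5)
Your reformulation of $\partial_{z,c}$ as a sum of interior contractions $e_{a_1}\cdots e_{a_{i-1}}(e_{a_{i+1}}-e_{a_i})e_{a_{i+2}}\cdots e_{a_n}$ plus two boundary terms is correct, and your shuffle argument is essentially the paper's own proof of Theorem \ref{thm:shuffle}: mixed-origin interior gaps cancel under the sign-reversing transposition of adjacent letters coming from different words, and the same-origin gaps, with the boundary terms sorted by the origin of the first and last letters, reassemble into $(\partial_{z,c}u)\shuffle v+u\shuffle(\partial_{z,c}v)$. That part stands.

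The stuffle and duality parts, however, contain genuine gaps, and the stuffle case is precisely the hard part of the theorem. You write that the cancellation of the contractions created at the junction of a $u$-block with a $v$-block ``is what I expect to be the main obstacle'' --- that is, you have located the difficulty but not resolved it. Two concrete problems. First, an induction on the recursion $e_au*e_bv=e_{ab}(u*e_bv+e_au*v-e_0(u*v))$ does not close as stated: $\partial_{z,c}$ is defined relative to the fixed boundary data $a_0=0$, $a_{n+1}=1$, so after peeling off the leading letter $e_{ab}$ the operator acting on the tail has left boundary $ab$ rather than $0$; to make the induction hypothesis applicable you would have to prove the statement for the whole family $\partial^{s,t}$ of operators with variable endpoints, which changes the statement being proved. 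Second, the needed cancellation is not a self-cancellation of ``junction'' terms: in the paper's (non-inductive) proof via the lattice-path description of $*$, the terms obtained by deleting a merged letter cancel against terms obtained by deleting an ordinary letter lying between a horizontal and a vertical step ($s(Q_{\#})+s(Q_{\uparrow\rightarrow})+s(Q_{\rightarrow\uparrow})=0$), while a separate three-way combination produces $u*\partial v$; carrying this out requires replacing the coefficient $[f(p)-f(p')]$ by an edge-weight $h$ that is defined even when adjacent labels coincide (this is the role of Lemma \ref{lem:lift_ad}), and your sketch has no counterpart of this device. Finally, the duality step is not ``only careful bookkeeping'': since $\tau_z(e_0)=e_z-e_1$ and $\tau_z(e_1)=e_z-e_0$, the word $\tau_z(u)$ expands into $2^k$ monomials, and the intertwining with $\partial_{z,c}$ requires cancellations among the terms involving the subtracted letter (the paper's sets $Q'$, $Q''$ and the identity $\lambda(v_1,v_2)=\lambda(v_2,v_3)$ across an $\infty$-labelled vertex); a finite check on the alphabet alone does not suffice. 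As it stands, the proposal proves only the first of the three identities.
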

The first two equalities state that the linear operator $\partial_{z,c}$
is a derivation with respect to both the shuffle and the stuffle products
(the second equality can also be written as $\partial_{z,c}(u*v)=(\partial_{z,c}u)*v+u*(\partial_{z,c}v)$
since $\partial_{z,c}u=0$ for $u\in\mathcal{A}_{\{0,1\}}^{0}$).

These formulas provide fundamental and useful tools in the study of MZVs and iterated integrals. Let us discuss some
significance of them. First of all, the formulas are purely algebraic
and therefore have wide applications. One of such important applications
is given in \cite{Confluence}, where the authors have proved that the confluence
relation implies the regularized double shuffle and the duality relations.
Recently, the confluence relation was proved to be equivalent to the
associator relation by Furusho \cite{Furusho_equivalence_of_AS_and_CF}, and so this result can also be viewed
as an alternative proof of the main result of \cite{Furusho_AS_implpies_RDS}. Furthermore, let $\gamma$ be a general path from $0$ to $1$,
and $L_{\gamma}$ denote the map similarly defined as $L$, where
the defining iterated integral is replaced with that along $\gamma$.
By applying $L_{\gamma}$ to the algebraic differential formula of
the stuffle product, one can obtain
\begin{equation}
\frac{d}{dz}L_{\gamma}(u*v)=\sum_{c\in\{0,1\}}\frac{1}{z-c}L_{\gamma}(u*\partial_{z,c}v).\label{eq:diff_stuffle_general_path}
\end{equation}
Note that for the trivial path $\gamma(t)=t$, this identity gives
an alternative proof of the stuffle relation (\ref{eq:stuffle}). Thus, we may naturally
expect that (\ref{eq:diff_stuffle_general_path}) would serve as a
first step to discover and prove the counterpart of (\ref{eq:stuffle}) for $L_{\gamma}$ with more general $\gamma$.
We also define a natural generalization of the stuffle product and
prove the three formulas above in far more general forms (see Theorems
 \ref{thm:shuffle}, \ref{thm:stuffle}, \ref{thm:lin_fra_main}).
It is quite likely that these formulas have wide applications
in general iterated integrals, for instance, for proving analogous
results as in \cite{Confluence} (i.e., the implication of the regularized double
shuffle and the duality relations by the confluence relation) for
Euler sums, multiple L-values and even more general iterated integrals.

This paper is organized as follows. In Section \ref{sec:Basic-settings}, we introduce some basic settings
and state a useful lemma used in the proof of the theorem. In Section
\ref{sec:Shuffle}, we prove the algebraic differential formula for
the shuffle product (Theorem \ref{thm:shuffle}). In Section \ref{sec:Stuffle},
we prove the algebraic differential formula for the stuffle product (Theorem
\ref{thm:stuffle}). In Section \ref{sec:Mbius-transformation}, we
introduce algebraic M\"{o}bius transformations as a generalization
of the duality map, and prove their algebraic differential formula
(Theorem \ref{thm:lin_fra_main}). In Section \ref{sec:An-application-to01z},
we derive the theorem above as special cases of Theorems \ref{thm:shuffle},
\ref{sec:Stuffle}, \ref{sec:Mbius-transformation}.

\section{\label{sec:Basic-settings}Basic settings}

Let $F$ be a field and $\mathcal{A}=\mathcal{A}_{F}$ be the non-commutative
free algebra over $\mathbb{Z}$ generated by the indeterminates $\{e_{p}\mid p\in F\}$.
For $s,t\in F$, we denote by $\mathcal{A}_{(s,t)}^{0}=\mathcal{A}_{F,(s,t)}^{0}$
the subalgebra
\[
\mathcal{A}_{F,(s,t)}^{0}=\mathbb{Z}\oplus\bigoplus_{z\in F\setminus\{s,t\}}\mathbb{Z}e_{z}\oplus\bigoplus_{\substack{x\in F\setminus\{s\}\\
y\in F\setminus\{t\}
}
}e_{x}\mathcal{A}_{F}e_{y}\subset\mathcal{A}_{F}.
\]
In particular, we put $\mathcal{A}_{F}^{0}=\mathcal{A}_{F,(0,1)}^{0}$.

We fix a homomorphism $\mathfrak{F}:F^{\times}\to\mathbb{Z}$. For
$x\in F$, define $[x]\in\mathbb{Z}$ by
\[
[x]=\begin{cases}
\mathfrak{F}(x) & x\neq0\\
0 & x=0.
\end{cases}
\]
Note that since $\mathbb{Z}$ is torsion-free, $[-x]=[x]$ for $x\in F^{\times}$.
We define an algebraic differential operator $\partial^{s,t}=\partial_{\mathfrak{F}}^{s,t}:\mathcal{A}\to\mathcal{A}$
by%
\footnote{The motivation to consider such an operator comes from the following
differential formula (Goncharov \cite[Theorem 2.1]{GonTotdif}, Panzer
\cite[Lemma 3.3.30]{PanzerArxiv})
\begin{align*}
dI(s;a_{1},\dots,a_{n};t)= & \sum_{\substack{1\leq i\leq n\\
a_{i}\neq a_{i+1}
}
}I(s;a_{1},\dots,\widehat{a_{i}},\dots,a_{n};t)\, d\log(a_{i+1}-a_{i})\\
 & -\sum_{\substack{1\leq i\leq n\\
a_{i}\neq a_{i-1}
}
}I(s;a_{1},\dots,\widehat{a_{i}},\dots,a_{n};t)\, d\log(a_{i}-a_{i-1})\quad\left((a_{0},a_{n+1})=(s,t)\right),
\end{align*}
where $I$ is the iterated integral symbol by Goncharov \cite{GonTotdif}.%
}
\[
\partial^{s,t}(e_{a_{1}}\cdots e_{a_{n}}):=\sum_{i=1}^{n}([a_{i+1}-a_{i}]-[a_{i}-a_{i-1}])e_{a_{1}}\cdots\widehat{e_{a_{i}}}\cdots e_{a_{n}}\qquad\left((a_{0},a_{n+1})=(s,t)\right).
\]
In particular, we put $\partial=\partial^{0,1}$. The following lemma
provides a useful technique in later sections.
\begin{lem}
\label{lem:lift_ad}Let $a_{0},\dots,a_{n+1}\in F$ (with $(a_{0},a_{n+1})=(s,t)$)
and $f:\{0,\dots,n\}\to\mathbb{Z}$ be any map such that
\[
f(i)=[a_{i+1}-a_{i}]\qquad\mbox{for all }i\in\{0\leq j\leq n\mid a_{j}\neq a_{j+1}\}.
\]
Then, we have
\begin{align*}
\partial^{s,t}(e_{a_{1}}\cdots e_{a_{n}})= & \sum_{i=1}^{n}\left(f(i)-f(i-1)\right)e_{a_{1}}\cdots\widehat{e_{a_{i}}}\cdots e_{a_{n}}\\
 & +\delta_{s,a_{1}}f(0)\, e_{a_{2}}\cdots e_{a_{n}}-\delta_{a_{n},t}f(n)\, e_{a_{1}}\cdots e_{a_{n-1}}.
\end{align*}

\end{lem}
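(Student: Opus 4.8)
The plan is to compare the asserted expression with the definition of $\partial^{s,t}$ term by term, taking advantage of the fact that $f$ is pinned down only on the indices $i$ with $a_i\neq a_{i+1}$. Writing $g(i):=[a_{i+1}-a_i]$ for $0\le i\le n$, the definition reads $\partial^{s,t}(e_{a_1}\cdots e_{a_n})=\sum_{i=1}^n\bigl(g(i)-g(i-1)\bigr)e_{a_1}\cdots\widehat{e_{a_i}}\cdots e_{a_n}$, so I would set $h(i):=f(i)-g(i)$ and first establish its support. The hypothesis on $f$ says precisely that $h(i)=0$ whenever $a_i\neq a_{i+1}$; and when $a_i=a_{i+1}$ we have $g(i)=[0]=0$, so there $h(i)=f(i)$. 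Thus the entire discrepancy between the two formulas is governed by the ``correction'' $h$, which is supported on the indices where two consecutive letters coincide.

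Next I would subtract the definition from the claimed right-hand side and show the difference vanishes. The first step is to record the two boundary identities $\delta_{s,a_1}f(0)=h(0)$ and $\delta_{a_n,t}f(n)=h(n)$. Indeed $\delta_{s,a_1}=\delta_{a_0,a_1}$: if $a_0=a_1$ then $g(0)=0$ and $f(0)=h(0)$, while if $a_0\neq a_1$ then both $\delta_{a_0,a_1}$ and $h(0)$ vanish (the latter by the hypothesis, which gives $f(0)=g(0)$); the identity at the right endpoint is symmetric. After these substitutions the difference becomes $D:=\sum_{i=1}^n\bigl(h(i)-h(i-1)\bigr)W_i+h(0)W_1-h(n)W_n$, where I abbreviate $W_i:=e_{a_1}\cdots\widehat{e_{a_i}}\cdots e_{a_n}$, so that $W_1=e_{a_2}\cdots e_{a_n}$ and $W_n=e_{a_1}\cdots e_{a_{n-1}}$.

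I would then reorganize $D$ by collecting the coefficient of each $h(i)$: the value $h(i)$ enters as $+h(i)W_i$ from the $i$-th summand and as $-h(i)W_{i+1}$ from the $(i+1)$-th summand, and the two extra boundary contributions $h(0)W_1$ and $-h(n)W_n$ are exactly what is needed to absorb the leftover endpoint pieces coming from the reindexing at $i=0$ and $i=n$. Carrying out this telescoping shift (splitting $\sum_{i=1}^n h(i-1)W_i=\sum_{j=0}^{n-1}h(j)W_{j+1}$ and cancelling the $j=0$ and $i=n$ terms against the boundary terms) collapses $D$ to $\sum_{i=1}^{n-1}h(i)\bigl(W_i-W_{i+1}\bigr)$.

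The conceptual crux — and the only point where the hypothesis is genuinely used — is the final observation that each surviving summand is zero. If $h(i)\neq 0$ then $a_i=a_{i+1}$, and in that case deleting the $i$-th letter or the $(i+1)$-th letter from $e_{a_1}\cdots e_{a_n}$ yields the same word, so $W_i=W_{i+1}$ and the summand vanishes; otherwise $h(i)=0$. Hence $D=0$, which is the assertion. I do not expect a real obstacle here: the argument is elementary, and the only care required is the bookkeeping of the two boundary terms and the telescoping reindexing, which is precisely arranged so that the endpoint contributions cancel.
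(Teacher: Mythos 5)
Your proposal is correct and is exactly the direct verification the paper has in mind (the paper simply states that the lemma "follows directly from the definition of $\partial^{s,t}$"); your decomposition via $h=f-g$, the telescoping reindexing, and the observation that $W_i=W_{i+1}$ when $a_i=a_{i+1}$ together constitute a complete and careful write-up of that verification.
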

The proof follows directly from the definition of $\partial^{s,t}$. Note that
the right-hand side in the above equality does not depend on the choice
of $f$ and so we choose suitable $f$ which is convenient for the
situation.

As a general notation, for a given function $f:S\rightarrow\mathcal{A}$
on a finite set $S$, we define
\[
f(T):=\sum_{x\in T}f(x)
\]
for $T\subset S$. We often use this notation in the following sections.

\section{\label{sec:Shuffle}Differential formula for the shuffle product}

In this section, we shall prove the derivation property of the algebraic
differential operator with respect to the shuffle product.
\begin{defn}
We define the shuffle product $\shuffle:\mathcal{A}\times\mathcal{A}\to\mathcal{A}$
inductively by $w\shuffle1=1\shuffle w=w$ for $w\in\mathcal{A}$
and
\[
e_{a}u\shuffle e_{b}v=e_{a}(u\shuffle e_{b}v)+e_{b}(e_{a}u\shuffle v)\qquad(u,v\in\mathcal{A}).
\]

\end{defn}
As is well known, the shuffle product can also be defined in a more
combinatorial way as
\[
a_{1}\cdots a_{n}\,\shuffle\, a_{n+1}\cdots a_{n+m}=\sum_{\sigma\in S(n,m)}a_{\sigma(1)}\cdots a_{\sigma(n+m)}\qquad(a_{1},\ldots,a_{n+m}\in\{e_{p}\mid p\in F\}),
\]
where $S(n,m)$ is a set of all permutations $\sigma$'s of $\{1,2,\dots,n+m\}$
such that $\sigma^{-1}(1)<\sigma^{-1}(2)<\cdots<\sigma^{-1}(n)$ and
$\sigma^{-1}(n+1)<\sigma^{-1}(n+2)<\cdots<\sigma^{-1}(n+m)$.
\begin{thm}
\label{thm:shuffle}For $u,v\in\mathcal{A}$, $\partial(u\shuffle v)=(\partial u)\shuffle v+u\shuffle(\partial v)$.\end{thm}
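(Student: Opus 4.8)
The plan is to prove the derivation property by induction on the total degree $n+m$ of the two words, using the inductive definition of the shuffle product together with Lemma~\ref{lem:lift_ad} to control the boundary terms produced by $\partial=\partial^{0,1}$. Since both sides of the identity are bilinear, it suffices to treat the case where $u=e_a u'$ and $v=e_b v'$ are words in the generators (with the base cases $u=1$ or $v=1$ being immediate, as $\partial(1)=0$). Writing $w \shuffle w'$ via the recursion $e_a u' \shuffle e_b v' = e_a(u' \shuffle e_b v') + e_b(e_a u' \shuffle v')$, I would apply $\partial$ to each summand and hope to reorganize the result into $(\partial(e_a u'))\shuffle e_b v' + e_a u' \shuffle (\partial(e_b v'))$.

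The essential difficulty is that $\partial$ is \emph{not} simply a derivation on the concatenation-times-one-letter operation $w \mapsto e_a w$: the operator $\partial^{0,1}$ depends on the neighboring letters through the differences $[a_{i+1}-a_i]-[a_i-a_{i-1}]$, with the convention $(a_0,a_{n+1})=(0,1)$ supplying the two boundary contributions. When I strip off a leading letter $e_a$ from a word, the ``left boundary'' $a_0=0$ seen by the inner word $u'$ is wrong: inside $e_a u'$ the letter following $a_0=0$ is $a$, whereas $u'$ on its own uses $a_0=0$ adjacent to its first letter. This mismatch is exactly what Lemma~\ref{lem:lift_ad} is designed to absorb. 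The key step, then, is to choose an auxiliary function $f$ in the lemma that reflects the correct left-neighbor for each piece of the shuffle, so that the two boundary correction terms $\delta_{0,a_1}f(0)\,e_{a_2}\cdots$ and $-\delta_{a_n,1}f(n)\,\cdots e_{a_{n-1}}$ bookkeep the discrepancies produced when letters are prepended to the sub-words. Concretely, for the term $e_a(u'\shuffle e_b v')$ I expect to take $f$ on the inner word so that its left boundary value $f(0)$ equals $[a_1-a]$ rather than $[a_1-0]$, thereby matching the actual neighbor $a$ inside $e_a(\cdots)$.

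The main obstacle will be verifying that, after expanding $\partial(e_a(u'\shuffle e_b v'))$ and $\partial(e_b(e_a u' \shuffle v'))$ and summing, every internal term and every boundary term recombines \emph{exactly} into the shuffle of the differentiated words, with no leftover contributions. This is a careful but ultimately bookkeeping argument: each place where a letter is deleted from the left argument must pair correctly with the shuffle partner, and the boundary differences introduced by prepending $e_a$ or $e_b$ must cancel in pairs or fold into the inductive hypothesis applied to the strictly shorter words $u' \shuffle e_b v'$ and $e_a u' \shuffle v'$. I anticipate that invoking Lemma~\ref{lem:lift_ad} with a $z$-dependent (equivalently, neighbor-dependent) choice of $f$ is precisely what makes these boundary terms transparent, reducing the whole identity to the inductive hypothesis plus an explicit matching of the deleted-first-letter terms.
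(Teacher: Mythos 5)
Your plan has a genuine gap at its center: the step you describe as ``a careful but ultimately bookkeeping argument'' is the entire content of the proof, and the mechanism you propose for it does not work as stated. Lemma~\ref{lem:lift_ad} does \emph{not} let you reset the left boundary of the inner word from $0$ to $a$: the hypothesis of the lemma forces $f(0)=[a_1-a_0]=[a_1-s]$ whenever $a_1\neq s$, so the choice $f(0)=[a_1-a]$ you want to make is not available in general (the freedom in $f$ exists only at indices where consecutive letters coincide, and even then it is compensated by the correction terms, not absorbed into a new boundary). What actually relates the prepended word to the inner one is the identity $\partial^{s,t}(e_aW)=\bigl([c_1-a]-[a-s]\bigr)W+e_a\,\partial^{a,t}(W)$ for a monomial $W=e_{c_1}\cdots e_{c_k}$. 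This has two consequences you have not addressed: first, the induction cannot be run for $\partial^{0,1}$ alone, since stripping a letter changes the left boundary, so you must prove the statement for all $\partial^{s,t}$ simultaneously; second, the scalar term $\bigl([c_1-a]-[a-s]\bigr)W$ depends on the \emph{first letter} of $W$, and $u'\shuffle e_bv'$ is a sum of words with different first letters, so this term does not pass linearly through the shuffle sum and must be split according to which word contributes the first letter. None of this recombination is verified in your proposal, and it is exactly where the difficulty lies.

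For comparison, the paper avoids induction entirely. It writes $\partial(u\shuffle v)$ as a sum $f(X)$ over pairs $(\sigma,i)$ with $\sigma\in S(n,m)$ a shuffle permutation and $i$ a deleted position, splits $X$ according to whether the two letters adjacent to the deletion come from the same factor or from different factors, and kills the mixed part $X_{IJ}\sqcup X_{JI}$ by the sign-reversing involution that transposes the two adjacent positions (using $f^{\pm}\circ\tau=f^{\mp}$). The surviving parts $\widetilde{X_{II}}$ and $\widetilde{X_{JJ}}$ are, by inspection, $(\partial u)\shuffle v$ and $u\shuffle(\partial v)$. If you want to salvage an inductive proof, you should state the claim for $\partial^{s,t}$ with arbitrary $s,t$, introduce the first-letter-dependent scalar operator explicitly, and carry the cancellation through; but the direct combinatorial involution is considerably cleaner.
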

\begin{proof}
It is sufficient to show the equality for monomials $u$ and $v$.
Put $u=a_{1}\cdots a_{n}$, $v=a_{n+1}\cdots a_{n+m}$ ($a_{1},\ldots,a_{n+m}\in\{e_{p}\mid p\in F\}$)
and $X=S(n,m)\times\{0,1,\dots,n+m\}$. Then, by definition,
\[
\partial(u\shuffle v)=f(X):=\sum_{(\sigma,i)\in X}f(\sigma,i),
\]
where $f(\sigma,i):=f^{+}(\sigma,i)-f^{-}(\sigma,i)$ with
\begin{align*}
f^{+}(\sigma,i):= & \begin{cases}
a_{\sigma(1)}\cdots\widehat{a_{\sigma(i)}}\cdots a_{\sigma(n+m)}[a_{\sigma(i+1)}-a_{\sigma(i)}] & (i\neq0)\\
0 & (i=0),
\end{cases}\\
f^{-}(\sigma,i):= & \begin{cases}
a_{\sigma(1)}\cdots\widehat{a_{\sigma(i+1)}}\cdots a_{\sigma(n+m)}[a_{\sigma(i+1)}-a_{\sigma(i)}] & (i\neq n+m)\\
0 & (i=n+m).
\end{cases}
\end{align*}
Put $I:=\{1,\dots,n\},$ $J:=\{n+1,\dots,n+m\}$ and
\[
X_{AB}:=\{(\sigma,i)\in X\mid0<i<n+m,\,\sigma(i)\in A,\,\sigma(i+1)\in B\}
\]
for $A,B\in\{I,J\}$. Further, we put
\begin{align*}
\widetilde{X_{II}}: & =X_{II}\sqcup\left\{ (\sigma,0)\in X\mid\sigma(1)\in I\right\} \sqcup\left\{ (\sigma,n+m)\in X\mid\sigma(n+m)\in I\right\} \\
\widetilde{X_{JJ}}: & =X_{JJ}\sqcup\left\{ (\sigma,0)\in X\mid\sigma(1)\in J\right\} \sqcup\left\{ (\sigma,n+m)\in X\mid\sigma(n+m)\in J\right\} .
\end{align*}
Then we have
\[
X=X_{IJ}\sqcup X_{JI}\sqcup\widetilde{X_{II}}\sqcup\widetilde{X_{JJ}}.
\]
Define a bijection $\tau:X_{IJ}\to X_{JI}$ by $\tau(\sigma,i)=(\sigma\circ\tau_{i},i)$,
where $\tau_{i}$ denotes the transposition of $i$ and $i+1$. Since
$f^{\pm}\circ\tau=f^{\mp}$, we have $f\circ\tau=-f$. Thus
\[
f(X_{IJ})+f(X_{JI})=0.
\]
Since
\begin{align*}
f(\widetilde{X_{II}}) & =(\partial u)\shuffle v,\\
f(\widetilde{X_{JJ}}) & =u\shuffle(\partial v)
\end{align*}
by definition, it readily follows that
\[
\partial(u\shuffle v)=(\partial u)\shuffle v+u\shuffle(\partial v).
\]

\end{proof}

\section{\label{sec:Stuffle}Differential formula for the stuffle product}

In this section, we shall prove the derivation property of the algebraic
differential operator with respect to the stuffle product.

\subsection{Definition of the generalized stuffle product}
\begin{defn}
\label{Stuffle product}We define the (generalized) stuffle product
$*:\mathcal{A}\times\mathcal{A}\to\mathcal{A}$ inductively by $w*1=1*w=w$
and
\begin{align*}
e_{a}u*e_{b}v & =e_{ab}(u*e_{b}v+e_{a}u*v-e_{0}(u*v))\,\qquad(u,v\in\mathcal{A}).
\end{align*}

\end{defn}
Note that in our definition of the stuffle product, $e_{0}$ plays
a particular role. Thus, we check the compatibility of our definition
with the usual one (\cite[Section 5]{BBBL_stuffle}, \cite[Section 1]{IKZ})
in the first place. Let $\mathfrak{h}^{1}$ be the non-commutative
free algebra generated by infinitely many formal indeterminates $\{z_{k,a}\mid k\in\mathbb{Z}_{\geq1},a\in F^{\times}\}$.
Define a binary operator $\bar{*}:\mathfrak{h}^{1}\times\mathfrak{h}^{1}\to\mathfrak{h}^{1}$
by $1\bar{*}w=w\bar{*}1=w$ for $w\in\mathfrak{h}^{1}$ and
\[
z_{k,a}u\bar{*}z_{l,b}v=z_{k,ab}(u\bar{*}z_{l,b}v)+z_{l,ab}(z_{k,a}u\bar{*}v)+z_{k+l,ab}(u\bar{*}v)\quad(u,v\in\mathfrak{h}^{1}).
\]
We define an embedding $i:\mathfrak{h}^{1}\hookrightarrow\mathcal{A}$
of algebra by $i(uv)=i(u)i(v)$ and $i(z_{k,a})=-e_{a}e_{0}^{k-1}$.
The following proposition assures that our stuffle product is a generalization
of the usual stuffle products.
\begin{prop}
\
\begin{enumerate}[\upshape (i)]
\item For $u,v\in\mathfrak{h}^{1}$, $i(u\bar{*}v)=i(u)*i(v)$.
\item The stuffle product $*$ is commutative and associative.
\end{enumerate}
\end{prop}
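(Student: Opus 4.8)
The plan is to prove the two parts in order, starting with part (i) since part (ii) will follow from it by transferring known properties of $\bar{*}$ through the embedding $i$. For part (i), the natural approach is induction on the total degree (word length) of $u$ and $v$. The base cases are when either $u$ or $v$ equals $1$, where both sides reduce to $i(\text{the other word})$ by the defining relations $w\bar{*}1=1\bar{*}w=w$ and $w*1=1*w=w$. For the inductive step I would write $u=z_{k,a}u'$ and $v=z_{l,b}v'$ and expand both sides. On the $\bar{*}$ side I apply the three-term recursion and then $i$; on the $*$ side I must first convert $i(z_{k,a}u')=-e_{a}e_{0}^{k-1}i(u')$ into the form $e_{a}(\text{something})$ so that the two-term stuffle recursion for $*$ applies.

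The main obstacle will be reconciling the two different recursion shapes: $\bar{*}$ has a genuine three-term rule producing $z_{k,ab}$, $z_{l,ab}$ and the merged term $z_{k+l,ab}$, whereas the definition of $*$ in the paper has a two-term-plus-correction shape $e_{a}u*e_{b}v=e_{ab}(u*e_{b}v+e_{a}u*v-e_{0}(u*v))$ and, crucially, only peels off a single generator $e_{a}$ rather than a whole block $z_{k,a}=-e_{a}e_{0}^{k-1}$. The key technical lemma I expect to need is an identity describing how $*$ interacts with a trailing string of $e_{0}$'s, i.e.\ a formula for $e_{a}e_{0}^{k-1}w_{1} * e_{b}e_{0}^{l-1}w_{2}$ that I can prove by an inner induction on $k$ (or on $k+l$) using the definition of $*$ repeatedly. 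Applying the recursion to $e_{a}(e_{0}^{k-1}i(u'))*e_{b}(e_{0}^{l-1}i(v'))$ and then pushing the remaining $e_{0}$'s through should generate exactly three surviving terms: one beginning $e_{a}e_{0}^{k-1}$, one beginning $e_{b}e_{0}^{l-1}$, and one where the stuffle correction term $-e_{0}(\cdots)$ conspires with the leftover $e_{0}^{k-1}$ and $e_{0}^{l-1}$ to merge the exponents into $e_{0}^{k+l-1}$, matching $i(z_{k+l,ab})$. Carefully tracking the signs coming from $i(z_{k,a})=-e_{a}e_{0}^{k-1}$ (each $z$ contributes a $-1$, so the merged term's single $-1$ must line up against the product of two $-1$'s) is where the calculation is most delicate.

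Once part (i) is established, part (ii) becomes almost immediate. Commutativity and associativity of $\bar{*}$ are standard and well known for this quasi-shuffle-type product (indeed $\bar{*}$ is precisely the usual stuffle/harmonic product, whose commutativity and associativity are classical). Since $i$ is an injective algebra embedding by construction and $i(u\bar{*}v)=i(u)*i(v)$ holds for all $u,v\in\mathfrak{h}^{1}$, the identities $i(u\bar{*}v)=i(v\bar{*}u)$ and $i((u\bar{*}v)\bar{*}w)=i(u\bar{*}(v\bar{*}w))$ transfer directly to $*$ on the image of $i$. To conclude commutativity and associativity of $*$ on all of $\mathcal{A}$, I would note that every generator $e_{a}$ with $a\neq 0$ lies in the image of $i$ (as $-i(z_{1,a})$), so the subalgebra generated by such $e_{a}$ is covered; the role of $e_{0}$, which appears only inside blocks $e_{a}e_{0}^{k-1}$ in the image, must be handled by checking that the stuffle recursion never requires peeling $e_{0}$ off the front in a way that escapes the image. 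If a clean reduction to the image of $i$ is not available for words beginning with $e_{0}$, the fallback is a direct induction proving commutativity and associativity of $*$ from its defining recursion, mirroring the standard quasi-shuffle argument.
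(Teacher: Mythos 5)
Your plan for part (i) is essentially the paper's proof. The paper also applies the two-term recursion once to $(e_{a}e_{0}^{k-1}i(u'))*(e_{b}e_{0}^{l-1}i(v'))$ and then pushes the leftover powers of $e_{0}$ outward; the ``key technical lemma'' you anticipate is exactly the identity $e_{0}u*v=e_{0}(u*v)$, proved by induction on $\deg v$ (writing $v=e_{b}v'$, the recursion gives $e_{0}(u*v)+e_{0}(e_{0}u*v')-e_{0}e_{0}(u*v')$ and the last two terms cancel by the inductive hypothesis). With that lemma the three surviving terms and the sign bookkeeping come out precisely as you describe, so part (i) is fine.

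For part (ii) you take a genuinely different route, and it is the one place where something is left open. The paper does \emph{not} transfer associativity through $i$: commutativity is immediate from the symmetry of the defining recursion, and associativity is proved directly on all of $\mathcal{A}$ by introducing a manifestly symmetric trilinear map $f(u,v,w)$ (defined by an inclusion--exclusion recursion with leading factors $e_{abc}$, $-e_{abc}e_{0}$, $e_{abc}e_{0}^{2}$) and showing $(u*v)*w=f(u,v,w)$ by induction. Your transfer argument buys you associativity only on the image of $i$, which is $\mathcal{A}^{1}$ (monomials not beginning with $e_{0}$), and even there you should be careful that $\bar{*}$ is the \emph{twisted} stuffle --- the first two terms carry $z_{k,ab}$ and $z_{l,ab}$, not $z_{k,a}$ and $z_{l,b}$ --- so the classical fact you invoke is the associativity of the multiple-polylogarithm stuffle of Borwein--Bradley--Broadhurst--Lison\v{e}k, not literally Hoffman's quasi-shuffle theorem. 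The $e_{0}$-boundary issue you flag is real but closable with the same lemma from part (i): every monomial is $e_{0}^{j}u_{1}$ with $u_{1}\in\mathcal{A}^{1}$, and $e_{0}u*v=e_{0}(u*v)$ together with commutativity gives $e_{0}^{j}u_{1}*e_{0}^{k}v_{1}=e_{0}^{j+k}(u_{1}*v_{1})$ with $u_{1}*v_{1}\in\mathcal{A}^{1}$, so associativity on $\mathcal{A}$ reduces to associativity on $\mathcal{A}^{1}$. You should carry out this reduction explicitly (or adopt the paper's trilinear-map device); note that the ``direct induction mirroring the standard quasi-shuffle argument'' you offer as a fallback is less routine than it sounds, since expanding $(u*v)*w$ produces cross terms that need an organizing principle such as the symmetric function $f$.
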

\begin{proof}[Proof of {\upshape(i)}]
It is sufficient to show the equality for monomials $u$ and $v$.
First of all, one can easily check that
\[
e_{0}u*v=e_{0}(u*v)
\]
holds for any monomials $u,v$ by induction on the degree of $v$.
Now we shall prove the proposition by induction. Set $u=z_{k,a}u'$,
$v=z_{l,b}v'$. Then we get
\begin{align*}
i(u)*i(v)= & (e_{a}e_{0}^{k-1}i(u'))*(e_{b}e_{0}^{l-1}i(v'))\\
= & e_{ab}(e_{0}^{k-1}i(u')*e_{b}e_{0}^{l-1}i(v'))+e_{ab}(e_{a}e_{0}^{k-1}i(u')*e_{0}^{l-1}i(v'))\\
 & -e_{ab}e_{0}(e_{0}^{k-1}i(u')*e_{0}^{l-1}i(v'))\\
= & -e_{ab}e_{0}^{k-1}(i(u')*i(v))-e_{ab}e_{0}^{l-1}(i(u)*i(v'))\\
 & -e_{ab}e_{0}^{k+l-1}(i(u')*i(v')).
\end{align*}
Using the induction hypothesis, the last quantity is equal to
\begin{align*}
 & -e_{ab}e_{0}^{k-1}i(u'\bar{*}v)-e_{ab}e_{0}^{l-1}i(u\bar{*}v')-e_{ab}e_{0}^{k+l-1}i(u'\bar{*}v')\\
 & =i(z_{k,ab}(u'\bar{*}v)+z_{l,ab}(u\bar{*}v')+z_{k+l,ab}(u'\bar{*}v'))\\
 & =i(u\bar{*}v)
\end{align*}
which proves the proposition.
\end{proof}
\begin{proof}[Proof of {\upshape(ii)}]
The commutativity of the stuffle product is obvious from the definition.
Let us prove the associativity. We define a trilinear map $f:\mathcal{A}\times\mathcal{A}\times\mathcal{A}\to\mathcal{A}$
inductively by
\[
f(u,v,1)=f(u,1,v)=f(1,u,v)=u*v
\]
and
\begin{align*}
f(u,v,w):= & e_{abc}\left(f(u',v,w)+f(u,v',w)+f(u,v,w')\right)\\
 & -e_{abc}e_{0}\left(f(u',v',w)+f(u',v,w')+f(u,v',w')\right)\\
 & +e_{abc}e_{0}^{2}f(u',v',w')\ \ \ \ (u=e_{a}u',v=e_{b}v',w=e_{c}w').
\end{align*}
Then we can show $(u*v)*w=f(u,v,w)$ by the induction since
\begin{align*}
(u*v)*w= & e_{ab}(u*v'+u'*v-e_{0}(u'*v'))*w\\
= & e_{abc}((u*v)*w')+e_{abc}\left((u*v'+u'*v-e_{0}(u'*v'))*w\right)\\
 & -e_{abc}e_{0}((u*v'+u'*v-e_{0}(u'*v'))*w')\\
= & e_{abc}((u*v)*w'+(u*v')*w+(u'*v)*w)\\
 & -e_{abc}e_{0}((u'*v')*w+(u*v')*w'+(u'*v)*w')\\
 & +e_{abc}e_{0}^{2}(u'*v'*w')\ \ \ \ (u=e_{a}u',v=e_{b}v',w=e_{c}w').
\end{align*}
Since $f(u,v,w)$ is a symmetric function by definition, we
have
\[
(u*v)*w=f(u,v,w)=f(v,w,u)=(v*w)*u=u*(v*w)
\]
for $u,v,w\in\mathcal{A}$. This is the associativity of $*$.
\end{proof}

\subsection{Combinatorial description of the stuffle product.\label{sub:CombSt}}

The stuffle product as well as the shuffle product enjoys an interesting
combinatorial structure. In this section, we give such a description
of the stuffle product. This description will be useful in the proof
of the algebraic differential formula stated in the next section.

Fix positive integers $m$ and $n$.
Let $G=(V,E)$ be a directed graph whose vertex set $V$ and edge
set $E$ are given by
\[
V=\left\{ \left.(x,y)\in\frac{1}{2}\mathbb{Z}\times\frac{1}{2}\mathbb{Z}\right|\begin{array}{c}
1\leq x\leq n+1,\\
\,1\leq y\leq m+1,
\end{array}\, x-y\in\mathbb{Z}\right\} \sqcup\left\{ \left(\frac{1}{2},\frac{1}{2}\right)\right\}
\]
and by
\begin{align*}
E= & \:\{\left((x,y),(x+1,y)\right)\in V^{2}\mid x,y\in\mathbb{Z}\}\\
 & \sqcup\{\left((x,y),(x,y+1)\right)\in V^{2}\mid x,y\in\mathbb{Z}\}\\
 & \sqcup\left\{ \left.\left((x,y),(x+\frac{1}{2},y+\frac{1}{2})\right)\in V^{2}\right|x,y\in\frac{1}{2}\mathbb{Z}\right\} ,
\end{align*}
respectively (see Figure \ref{fig:Example}).
\begin{figure}[h]
\[
\xymatrix{ & (2,1)\ar[rr] &  & (2,2)\ar[rr] &  & (2,3)\\
 &  & (1\frac{1}{2},1\frac{1}{2})\ar[ru] &  & (1\frac{1}{2},2\frac{1}{2})\ar[ru]\\
 & (1,1)\ar[rr]\ar[ru]\ar[uu] &  & (1,2)\ar[rr]\ar[ru]\ar[uu] &  & (1,3)\ar[uu]\\
(\frac{1}{2},\frac{1}{2})\ar[ru]
}
\]
\caption{\label{fig:Example}The graph $G$ for the case $(n,m)=(1,2)$}
\end{figure}
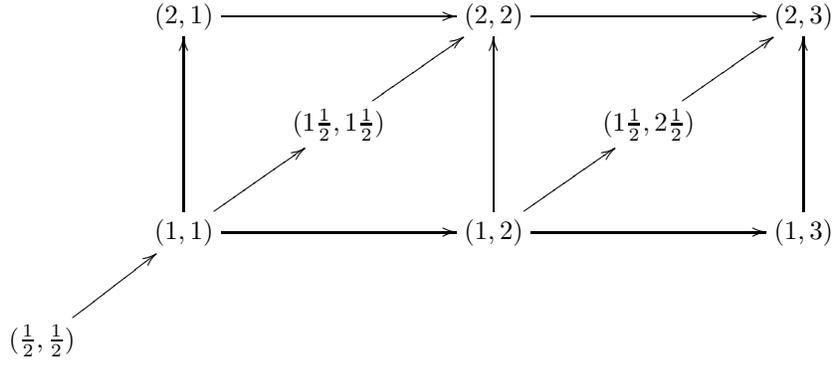
We denote by $P$ the set of paths from $(1/2,1/2)$ to $(n+1,m+1)$,
i.e.,
\begin{align*}
P: & =\left\{ \path{p}=\left(p_{0},p_{1},\ldots,p_{n+m+1}\right)\left|\,\begin{array}{c}
p_{0}=\left(1/2,1/2\right),\\
p_{n+m+1}=(n+1,m+1),
\end{array}\begin{array}{c}
(p_{i},p_{i+1})\in E\\
\mbox{ for }0\leq i\leq n+m
\end{array}\right.\right\}.
\end{align*}
Hereafter, for a path $\path{p}\in{P}$, we denote by $p_i$ the $(i+1)$-th entry of $\path{p}$, i.e.,
$\path{p}=\left(p_{0},p_{1},\ldots,p_{n+m+1}\right)$.
Furthermore we define ${\rm sgn}:P\to\{\pm1\}$ by
\[
{\rm sgn}\left(\path{p}\right)=\prod_{\substack{1\leq i\leq n+m\\
p_{i}\notin\mathbb{Z}^{2}
}
}(-1).
\]
For monomials $u=e_{a_{1}}\cdots e_{a_{n}}$ and $v=e_{b_{1}}\cdots e_{b_{m}}$ of degree $n$ and $m$ respectively, define $f_{u,v}:V\to F$ by
\[
f_{u,v}(x,y)=\begin{cases}
a_{x}b_{y} & (x,y)\in\mathbb{Z}^{2}\\
0 & (x,y)\notin\mathbb{Z}^{2}.
\end{cases}
\]
Here, we set $a_{n+1}=b_{m+1}=1$. Then, we have the
following combinatorial expression for the stuffle product.
\begin{prop}
\label{prop:Stuffle product}
For monomials $u$ and $v$ of degree $n$ and $m$ respectively,
\[
u*v=\sum_{\path{p}\in P}{\rm sgn}(\path{p})\, e_{f_{u,v}(p_{1})}\cdots e_{f_{u,v}(p_{n+m})}
\]

\end{prop}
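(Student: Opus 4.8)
The plan is to prove the identity by induction on the total degree $n+m$, decomposing each path in $P$ according to its behaviour near the corner $(1,1)$ and matching the resulting three pieces with the three terms in the recursive definition of $*$ (Definition \ref{Stuffle product}). First I would dispose of the base case in which one of the factors is empty, say $v=1$ (the case $u=1$ being symmetric by the commutativity of $*$ and the reflection $(x,y)\mapsto(y,x)$ of the graph). When $m=0$ the only admissible moves keep $y=1$, so there is a unique path $(1/2,1/2),(1,1),(2,1),\dots,(n+1,1)$; it has no half-integer vertices, so its sign is $+1$, and since $f_{u,v}(x,1)=a_x$ (using $b_{m+1}=1$) its weight is $e_{a_1}\cdots e_{a_n}=u=u*1$, as required.

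For the inductive step with $n,m\ge 1$, write $u=e_{a_1}u'$ and $v=e_{b_1}v'$. Since $(1,1)$ is the only neighbour of $(1/2,1/2)$, every path begins with the forced diagonal step $p_0=(1/2,1/2)$, $p_1=(1,1)$, contributing the common leading letter $e_{f_{u,v}(1,1)}=e_{a_1b_1}$. I would then split $P$ into three classes according to the first edge leaving $(1,1)$: the horizontal edge to $(2,1)$, the vertical edge to $(1,2)$, and the diagonal edge to the midpoint $(3/2,3/2)$ followed by the forced step to $(2,2)$. These possibilities are exhaustive and disjoint.

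The heart of the argument is to identify each class, after removing the leading vertex and translating, with the full path set of a smaller instance. For the horizontal class the map $\path{p}\mapsto\path{p'}$ given by $p'_0=(1/2,1/2)$ and $p'_i=p_{i+1}-(1,0)$ for $i\ge 1$ is a bijection onto the path set for $u'*v$; it sends integer vertices to integer vertices and midpoints to midpoints, so the sign is preserved, and $f_{u',v}(p'_i)=f_{u,v}(p_{i+1})$ because $a'_{x-1}=a_x$ and $a_{n+1}=1$. Hence this class contributes $e_{a_1b_1}\,(u'*v)$ by the induction hypothesis. The vertical class is treated identically with a translation in $y$, giving $e_{a_1b_1}\,(u*v')$. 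For the diagonal class the intermediate midpoint $(3/2,3/2)$ supplies a letter $e_{f_{u,v}(3/2,3/2)}=e_0$ together with a single factor $-1$ in the sign, and translating the remainder by $(1,1)$ identifies it with the path set for $u'*v'$, contributing $-e_{a_1b_1}e_0\,(u'*v')$. Summing the three contributions yields $e_{a_1b_1}\bigl(u'*v+u*v'-e_0(u'*v')\bigr)$, which is exactly $e_{a_1}u'*e_{b_1}v'=u*v$ by Definition \ref{Stuffle product}, closing the induction.

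The main obstacle, and the only place demanding real care, is the bookkeeping in these three bijections: one must check simultaneously that each coordinate translation is a graph isomorphism onto the smaller graph, that it matches the weight functions under the re-indexing $a'_i=a_{i+1}$ (resp. $b'_i=b_{i+1}$) together with the boundary conventions $a_{n+1}=b_{m+1}=1$, and that the sign—defined as $(-1)$ raised to the number of half-integer vertices among $p_1,\dots,p_{n+m}$—is correctly transported, the diagonal class being the only one that creates an extra midpoint and hence the only one carrying the sign $-1$. Once these compatibilities are verified, the three-way split reproduces the recursion term by term and the proof is complete.
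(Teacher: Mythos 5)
Your proof is correct and is essentially the verification the paper leaves to the reader (the paper only remarks that ``one can easily check'' the equivalence with Definition~\ref{Stuffle product}): the forced initial step to $(1,1)$, the three-way split of $P$ according to the edge leaving $(1,1)$, and the sign-, weight- and boundary-convention-preserving translations onto the path sets for $u'*v$, $u*v'$ and $u'*v'$ reproduce the recursion term by term. No gaps; the bookkeeping you flag (midpoints carrying both the $e_0$ letter and the $-1$ sign, and $a_{n+1}=b_{m+1}=1$ matching the re-indexed boundary) is exactly what needs checking and you check it.
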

One can easily check that this expression is equivalent to Definition
\ref{Stuffle product}.
\begin{rem}
The vertices $p_{0}=(\frac{1}{2},\frac{1}{2})$ and $p_{n+m+1}=(n+1,m+1)$
do not appear in Proposition \ref{prop:Stuffle product}, but it is
convenient in the description of the proof of Theorem \ref{thm:stuffle}.
\end{rem}

\subsection{An algebraic differential formula of the stuffle product.}

Put
\[
\mathcal{A}^{1}=\mathcal{A}_{F}^{1}=\mathbb{Z}\oplus\bigoplus_{z\in F^{\times}}e_{z}\mathcal{A}_{F}\subset\mathcal{A}_{F}
\]
(note that $\mathcal{A}^{0}\subset\mathcal{A}^{1}$). In this section,
we prove the following identity:
\begin{thm}
\label{thm:stuffle}For non-constant monomials $u,v\in\mathcal{A}$,
we have
\[
\partial(u*v)=(\partial u)*v+u*\partial v+\delta_{a,0}[b](u'*v)+\delta_{b,0}[a](u*v'),
\]
where $u=e_{a}u',v=e_{b}v'$. In particular, for $u,v\in\mathcal{A}^{1}$,
\[
\partial(u*v)=(\partial u)*v+u*\partial v.
\]
\end{thm}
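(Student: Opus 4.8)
The plan is to prove the general identity for non-constant monomials and deduce the displayed $\mathcal{A}^{1}$-case as a corollary: if $u=e_{a}u'$ and $v=e_{b}v'$ with $a,b\in F^{\times}$, then $\delta_{a,0}=\delta_{b,0}=0$ and the correction terms disappear. By bilinearity it suffices to treat monomials $u=e_{a_{1}}\cdots e_{a_{n}}$ and $v=e_{b_{1}}\cdots e_{b_{m}}$, and I would work entirely through the combinatorial expansion of Proposition \ref{prop:Stuffle product}, writing $u*v=\sum_{\path{p}\in P}\mathrm{sgn}(\path{p})\,e_{F(p_{1})}\cdots e_{F(p_{n+m})}$ with $F:=f_{u,v}$ and applying $\partial$ term by term.

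The central tool is Lemma \ref{lem:lift_ad}. For each path the two auxiliary endpoints satisfy $F(p_{0})=F(1/2,1/2)=0$ and $F(p_{n+m+1})=F(n+1,m+1)=1$, so they encode exactly the boundary convention $(a_{0},a_{n+1})=(0,1)$ built into $\partial$; this is the point of the Remark following Proposition \ref{prop:Stuffle product}. I would then choose the edge-weight function $g$ uniformly: on a horizontal edge $(x,y)\to(x+1,y)$ put $g=[a_{x+1}-a_{x}]+[b_{y}]$, on a vertical edge $(x,y)\to(x,y+1)$ put $g=[b_{y+1}-b_{y}]+[a_{x}]$, and on the two halves of a diagonal $(x,y)\to(x+1/2,y+1/2)\to(x+1,y+1)$ put $[a_{x}]+[b_{y}]$ and $[a_{x+1}]+[b_{y+1}]$ respectively. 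Using $[\alpha\beta]=[\alpha]+[\beta]$ for $\alpha,\beta\in F^{\times}$, together with $[1]=0$ and $[-x]=[x]$, one checks that this $g$ agrees with $[F(p_{i+1})-F(p_{i})]$ whenever $F(p_{i})\neq F(p_{i+1})$, so it is a legal choice. With it the right-boundary term $-\delta_{F(p_{n+m}),1}\,g(n+m)$ vanishes, since the last edge already has weight $0$ whenever $F(p_{n+m})=1$; and the left-boundary term $+\delta_{0,F(p_{1})}\,g(0)$ yields the corrections $\delta_{a,0}[b]\,(u'*v)+\delta_{b,0}[a]\,(u*v')$, because $p_{1}=(1,1)$ is forced so $F(p_{1})=ab$ and the first edge has weight $g(0)=[a]+[b]$.

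It then remains to identify the bulk sum $\sum_{\path{p}}\mathrm{sgn}(\path{p})\sum_{i}(g(i)-g(i-1))\,e_{F(p_{1})}\cdots\widehat{e_{F(p_{i})}}\cdots e_{F(p_{n+m})}$ with $(\partial u)*v+u*\partial v$. Here I would split $g=U+V$ into the $u$-part and $v$-part listed above (so, for instance, a horizontal edge contributes $U=[a_{x+1}-a_{x}]$, $V=[b_{y}]$), whence the bulk sum splits as a $U$-sum plus a $V$-sum. The reflection $(x,y)\mapsto(y,x)$ carries the grid for $(u,v)$ to that for $(v,u)$, interchanges horizontal and vertical edges, fixes the diagonal edges, and hence preserves both $\mathrm{sgn}$ and the words read off, while exchanging $U$ and $V$. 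Thus the $V$-sum for $(u,v)$ equals the $U$-sum for $(v,u)$, and by commutativity of $*$ it is enough to prove the single identity that the $U$-sum for $(u,v)$ equals $(\partial u)*v$; the companion statement for the $V$-sum follows formally.

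I expect this last identity to be the main obstacle. Since the $U$-weights depend only on the $u$-column, the second differences $U(e_{i})-U(e_{i-1})$ vanish in the interior of every vertical run and survive only at the transitions where the path changes column or switches between straight and diagonal edges; by summation by parts these surviving contributions should telescope so that the net coefficient of each Delannoy path computing $u_{\widehat{x}}*v$ (the $x$-th term of $\partial u$, deleting the letter $e_{a_{x}}$) is exactly $[a_{x+1}-a_{x}]-[a_{x}-a_{x-1}]$. Making this precise requires a careful case analysis at the corner vertices and, above all, at the half-integer contraction vertices, together with bookkeeping of how $\mathrm{sgn}(\path{p})$ transforms under deleting a vertex and relabeling the remaining coordinates onto the smaller grid. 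This is the analogue of the sign-reversing bijection $\tau$ used in Theorem \ref{thm:shuffle}, but combinatorially heavier; reconciling the diagonal edges of the large grid with the diagonals appearing in the smaller-grid stuffle is the delicate point of the whole argument.
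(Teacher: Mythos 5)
Your setup is exactly the paper's: the expansion of $u*v$ via Proposition \ref{prop:Stuffle product}, the edge-weight function (the paper's $h$), its verification as an admissible choice of $f$ in Lemma \ref{lem:lift_ad}, and the boundary analysis (the right-hand boundary term vanishing, the left-hand one giving $\delta_{a,0}[b](u'*v)+\delta_{b,0}[a](u*v')$ --- for the latter you also implicitly need $e_{0}u'*v'=e_{0}(u'*v')$ so that the stripped sum $u'*v+u*v'-e_{0}(u'*v')$ collapses correctly) are all correct. The gap is the bulk term. Splitting $g=U+V$ and using the transposition of the grid is legitimate, but the symmetry by itself only gives the symmetrized statement $U_{u,v}+U_{v,u}=(\partial u)*v+u*\partial v$; the single identity you reduce to, namely that $U_{u,v}=(\partial u)*v$, is strictly stronger and carries the entire content of the theorem. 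You state it as an expectation (``should telescope'', ``requires a careful case analysis''), explicitly flag it as the delicate point, and do not prove it. As written, the heart of the argument is missing.

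The paper closes exactly this step by a different decomposition: rather than splitting the weights, it partitions $Q=P\times\{1,\dots,n+m\}$ according to the directions of the two edges adjacent to the deleted vertex $p_{i}$. This reduces everything to three finite, local verifications: $s(Q_{\nearrow\nearrow})=0$; $s(Q_{\uparrow\rightarrow})+s(Q_{\rightarrow\uparrow})+s(Q_{\#})=0$ via a sign-reversing $3$-to-$1$ correspondence matching the diagonal of a unit cell with the two staircase routes around it, the full weight differences cancelling; and the identifications of $s(Q_{\uparrow\uparrow})+s(Q_{\uparrow\nearrow})+s(Q_{\nearrow\uparrow})$ with $(\partial u)*v$ and of $s(Q_{\rightarrow\rightarrow})+s(Q_{\rightarrow\nearrow})+s(Q_{\nearrow\rightarrow})$ with $u*\partial v$, which work because deleting the middle vertex of such a configuration yields precisely an edge (straight or diagonal) of the smaller grid with coefficient $[a_{x+1}-a_{x}]-[a_{x}-a_{x-1}]$ (resp.\ its $b$-analogue). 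That is where the reconciliation of the large-grid diagonals with the small-grid diagonals, which you identify but defer, actually happens. To complete your route you would need to carry out essentially the same local case analysis, restricted to the $U$-components of the weights.
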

\begin{proof}
It is sufficient to show the identity for non-constant monomials $u$
and $v$. Put $u=e_{a_{1}}\cdots e_{a_{n}}$ and $v=e_{b_{1}}\cdots e_{b_{m}}$.
Let $P$ be the set of paths as defined in Section \ref{sub:CombSt}.
Put $f=f_{u,v}$ and $Q=P\times\{1,\cdots,n+m\}$.
Define $h:E\to\mathbb{Z}$ by
\[
h(p,p')=\begin{cases}
[a_{x+1}-a_{x}]+[b_{y}] & \mbox{ if }(p,p')=((x,y),(x+1,y))\\
{}[b_{y+1}-b_{y}]+[a_{x}] & \mbox{ if }(p,p')=((x,y),(x,y+1))\\
{}[a_{x}]+[b_{y}] & \mbox{ if }(p,p')=\begin{cases}
((x,y),(x+\frac{1}{2},y+\frac{1}{2}))\\
\mbox{ or }((x-\frac{1}{2},y-\frac{1}{2}),(x,y))
\end{cases}
\end{cases}
\]
for $x,y\in\mathbb{Z}$. Then $[f(p)-f(p')]=h(p,p')$ for all $(p,p')\in E$
such that $f(p)\neq f(p')$ (see Figure \ref{fig:ValuesFH}).

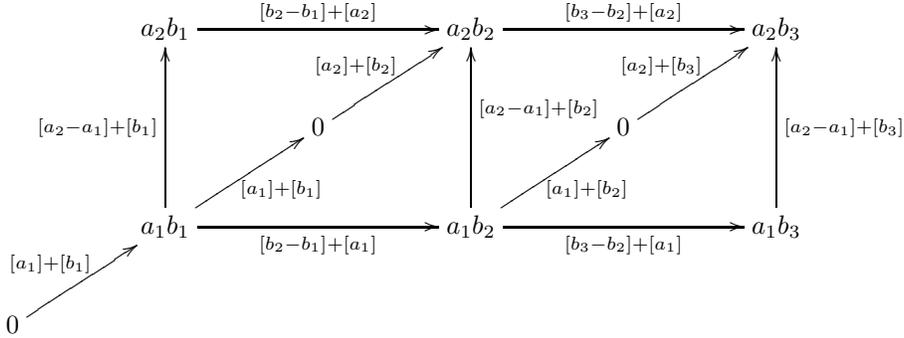
\begin{figure}[h]
\[
\xymatrix@C=40pt{ & a_{2}b_{1}\ar^-{[b_{2}-b_{1}]+[a_{2}]}[rr] &  & a_{2}b_{2}\ar^-{[b_{3}-b_{2}]+[a_{2}]}[rr] &  & a_{2}b_{3}\\
 &  & 0\ar^-{[a_{2}]+[b_{2}]}[ru] &  & 0\ar^-{[a_{2}]+[b_{3}]}[ru]\\
 & a_{1}b_{1}\ar_-{[b_{2}-b_{1}]+[a_{1}]}[rr]\ar_-{[a_{1}]+[b_{1}]}[ru]\ar^-{[a_{2}-a_{1}]+[b_{1}]}[uu] &  & a_{1}b_{2}\ar_-{[b_{3}-b_{2}]+[a_{1}]}[rr]\ar_-{[a_{1}]+[b_{2}]}[ru]\ar_(.6){[a_{2}-a_{1}]+[b_{2}]}[uu] &  & a_{1}b_{3}\ar_-{[a_{2}-a_{1}]+[b_{3}]}[uu]\\
0\ar^-{[a_{1}]+[b_{1}]}[ru]
}
\]
\caption{\label{fig:ValuesFH}Values of $f:V\to\mathbb{Z}$ and $h:E\to\mathbb{Z}$ }
\end{figure}

We define $s:Q\to\mathcal{A}_{F}$ by
\[
s(\path{p},i)={\rm sgn}(\path{p})\left(h(p_{i},p_{i+1})-h(p_{i-1},p_{i})\right)e_{f(p_{1})}\cdots\widehat{e_{f(p_{i})}}\cdots e_{f(p_{n+m})}
\]
for $\path{p}\in P$ and $i\in\{1,\dots,n+m\}$. Then, by Lemma \ref{lem:lift_ad},
we have
\begin{align*}
\partial(u*v)= & s(Q)+\sum_{\path{p}\in P}\delta_{f(p_{0}),f(p_{1})}h(p_{0},p_{1})\mathrm{sgn}(\path{p})e_{f(p_{2})}\cdots e_{f(p_{n+m})}\\
 & -\sum_{\path{p}\in P}\delta_{f(p_{n+m}),f(p_{n+m+1})}h(p_{n+m},p_{n+m+1})\mathrm{sgn}(\path{p})e_{f(p_{1})}\cdots e_{f(p_{n+m-1})},
\end{align*}
where $s(Q):=\sum_{(\path{p},i)\in Q}s(\path{p},i)$. Since
\[
\delta_{f(p_{0}),f(p_{1})}h(p_{0},p_{1})=\delta_{0,a_{1}b_{1}}h((\frac{1}{2},\frac{1}{2}),(1,1))=\delta_{0,a_{1}b_{1}}([a_{1}]+[b_{1}])
\]
and
\begin{align*}
\delta_{f(p_{n+m}),f(p_{n+m+1})}h(p_{n+m},p_{n+m+1}) & =\begin{cases}
\delta_{a_{n},1}([1-a_{n}]+[b_{m+1}]) & \mbox{ if }p_{n+m}=(n,m+1)\\
\delta_{b_{m},1}([1-b_{m}]+[a_{n+1}]) & \mbox{ if }p_{n+m}=(n+1,m)\\
0 & \mbox{ if }p_{n+m}=(n+\frac{1}{2},m+\frac{1}{2})
\end{cases}\\
 & =0,
\end{align*}
we have
\begin{equation}
\partial(u*v)=s(Q)+\Lambda,\label{eq:stpr0}
\end{equation}
where
\[
\Lambda:=\delta_{a_{1}b_{1},0}([a_{1}]+[b_{1}])\sum_{\path{p}\in P}{\rm sgn}(\path{p})e_{f(p_{2})}\cdots e_{f(p_{n+m})}.
\]
Since $\delta_{ab,0}[a]=\delta_{b,0}[a]$,
\begin{equation}
\Lambda=\delta_{a_{1},0}[b_{1}](u'*v)+\delta_{b_{1},0}[a_{1}](u*v'),\label{eq:lambda_value}
\end{equation}
where $u'=e_{a_{2}}\cdots e_{a_{n}}$ and $v'=e_{b_{2}}\cdots e_{b_{m}}$.
We decompose $Q$ as $Q=Q_{\mathbb{Z}}\sqcup Q_{\#}$, where
\begin{align*}
Q_{\mathbb{Z}}: & =\{(\path{p},i)\in Q\mid p_{i}\in\mathbb{Z}^{2}\},\\
Q_{\#}: & =\{(\path{p},i)\in Q\mid p_{i}\notin\mathbb{Z}^{2}\}.
\end{align*}
Moreover, by putting
\[
V_{\alpha}:=\begin{cases}
(1,0) & \mbox{ if }\alpha=\uparrow\\
(\frac{1}{2},\frac{1}{2}) & \mbox{ if }\alpha=\nearrow\\
(0,1) & \mbox{ if }\alpha=\rightarrow
\end{cases}
\]
and defining the sets
\[
 Q_{\alpha,\beta} :=\{(\path{p},i)\in Q\mid i>0,p_{i}\in\mathbb{Z}^{2},\, p_{i}=p_{i-1}+V_{\alpha},\, p_{i+1}=p_{i}+V_{\beta}\}\\
\]
for $\alpha,\beta,\gamma\in\{\uparrow,\nearrow,\rightarrow\}$, we
can further decompose $Q_{\mathbb{Z}}$ as
\[
Q_{\mathbb{Z}} =\bigsqcup_{\alpha,\beta\in\{\uparrow,\nearrow,\rightarrow\}}Q_{\alpha,\beta}.\\
\]
Now we have
\begin{align}
s(Q_{\nearrow\nearrow}) & =0\label{eq:ne_ne_zero}\\
s(Q_{\uparrow\rightarrow})+s(Q_{\rightarrow\uparrow})+s(Q_{\#}) & =0\label{eq:sharp_zero}\\
s(Q_{\rightarrow\rightarrow})+s(Q_{\rightarrow\nearrow})+s(Q_{\nearrow\rightarrow}) & =u*\partial v\label{eq:ee_der}\\
s(Q_{\uparrow\uparrow})+s(Q_{\uparrow\nearrow})+s(Q_{\nearrow\uparrow}) & =(\partial u)*v.\label{eq:nn_der}
\end{align}

The identity (\ref{eq:ne_ne_zero}) is obvious since $h(p,p+(\frac{1}{2},\frac{1}{2})) = h(p-(\frac{1}{2},\frac{1}{2}), p)$ if $p\in\mathbb{Z}^2$.

To check the identity (\ref{eq:sharp_zero}), define the bijections $\omega^{+}:Q_{\#}\longrightarrow Q_{\uparrow\rightarrow}$
and $\omega^{-}:Q_{\#}\longrightarrow Q_{\rightarrow\uparrow}$
by
\[
\omega^{\pm}(\path{p},i):=(\omega_{i}^{\pm}(\path{p}),i),
\]
where
\[
\omega_{i}^{\pm}(p_{0},\ldots,p_{n+m+1}):=\left(p_{0},\ldots,p_{i}\pm(\frac{1}{2},-\frac{1}{2}),\ldots,p_{n+m+1}\right).
\]
Then
\[
s(\omega^{+}(\path{p},i))+s(\omega^{-}(\path{p},i))+s(\path{p},i)=C(\path{p},i)\, e_{f(p_{1})}\cdots\widehat{e_{f(p_{i})}}\cdots e_{f(p_{n+m})}
\]
with
\begin{align*}
C(\path{p},i) & = {\rm sgn}(\omega_{i}^{+}(\path{p}))\left(h((x+1,y),(x+1,y+1))-h((x,y),(x+1,y))\right)\\
 &\ \  +{\rm sgn}(\omega_{i}^{-}(\path{p}))\left(h((x,y+1),(x+1,y+1))-h((x,y),(x,y+1))\right)\\
 &\ \  +{\rm sgn}(\path{p})\left(h((x+\frac{1}{2},y+\frac{1}{2}),(x+1,y+1))-h((x,y),(x+\frac{1}{2},y+\frac{1}{2}))\right)\\
 & =-{\rm sgn}(\path{p})\left([b_{y+1}-b_{y}]+[a_{x+1}]-[a_{x+1}-a_{x}]-[b_{y}]\right)\\
 & \ \ -{\rm sgn}(\path{p})\left([a_{x+1}-a_{x}]+[b_{y+1}]-[b_{y+1}-b_{y}]-[a_{x}]\right)\\
 & \ \ +{\rm sgn}(\path{p})\left([a_{x+1}]+[b_{y+1}]-[a_{x}]-[b_{y}]\right)\\
 & =0
\end{align*}
where we put $p_{i-1}=(x,y)$ (see Figure \ref{fig:fig1}).
\begin{figure}[h]
\[
\xymatrix@C=60pt{ &  &  &  & \ \\
 & p_{i}+(\frac{1}{2},-\frac{1}{2})\ar^{[b_{y+1}-b_{y}]+[a_{x+1}]}[rr] &  & p_{i+1}\ar@{~>}[ru]\\
 &  & p_{i}\ar^(.3){[a_{x+1}]+[b_{y+1}]\ \ \ \ }[ru]\\
 & p_{i-1}=(x,y)\ar_{[b_{y+1}-b_{y}]+[a_{x}]}[rr]\ar_(.6){\ \ [a_{x}]+[b_{y}]}[ru]\ar^{[a_{x+1}-a_{x}]+[b_{y}]}[uu] &  & p_{i}-(\frac{1}{2},-\frac{1}{2})\ar_{[a_{x+1}-a_{x}]+[b_{y+1}]}[uu]\\
\ \ar@{~>}[ru]
}
\]
\caption{\label{fig:fig1}The paths $\path{p}$ (diagonal), $\omega_{i}^{+}(\path{p})$ (upper), $\omega_{i}^{-}(\path{p})$ (lower).}
\end{figure}
Hence we obtain the identity (\ref{eq:sharp_zero}).

To check the identity (\ref{eq:ee_der}), we first decompose $Q_{\rightarrow\rightarrow}$,
$Q_{\rightarrow\nearrow}$ and $Q_{\nearrow\rightarrow}$ as
\[
Q_{\alpha,\beta}=\bigsqcup_{y=1}^{m}Q_{\alpha,\beta}^{(y)}
\]
where $(\alpha,\beta)\in\{(\rightarrow,\rightarrow),(\rightarrow,\nearrow),(\nearrow,\rightarrow)\}$
and
\[
Q_{\alpha,\beta}^{(y)}=\{(\path{p},i)\in Q_{\alpha,\beta}\mid \exists x, p_{i}=(x,y)\}.
\]
On the other hand, by definition, we have
\[
u*\partial v=\sum_{y=1}^{m}\left([b_{y+1}-b_{y}]-[b_{y}-b_{y-1}]\right)u*e_{b_{1}}\cdots\widehat{e_{b_{y}}}\cdots e_{b_{m}}
\]
where we set $b_{0}=0$.
By similar methods as for the identity (\ref{eq:sharp_zero}), we can show
\[
s(Q_{\rightarrow\rightarrow}^{(y)})+s(Q_{\rightarrow\nearrow}^{(y)})+s(Q_{\nearrow\rightarrow}^{(y)})=\left([b_{y+1}-b_{y}]-[b_{y}-b_{y-1}]\right)u*e_{b_{1}}\cdots\widehat{e_{b_{y}}}\cdots e_{b_{m}}
\]
(see Figures \ref{fig:RRU} and \ref{fig:RR}). Hence we have the identity (\ref{eq:ee_der}).

\begin{figure}[h]
  \[
  \xymatrix{ &  &  &  & \  &  & \ \\
   &  &  & a_{x+1}b_{y}\ar^{[b_{y+1}-b_{y}]+[a_{x+1}]}[rr] &  & a_{x+1}b_{y+1}\ar@{~>}[ru]\\
   &  & 0\ar^{[a_{x+1}]+[b_{y}]}[ru] &  & 0\ar[ru]\\
   & a_{x}b_{y-1}\ar_{[b_{y}-b_{y-1}]+[a_{x}]}[rr]\ar[ru] &  & a_{x}b_{y}\ar_{[a_{x}]+[b_{y}]}[ru]\\
  \ \ar@{~>}[ru]
  }
  \]
  \[
  \xymatrix{ &  &  &  & \ \\
   & a_{1}b_{1}\ar^{[b_{2}-b_{1}]+[a_{1}]}[rr] &  & a_{1}b_{2}\ar@{~>}[ru]\\
  0\ar^{[a_{1}]+[b_{1}]}[ru]
  }
  \]
\caption{\label{fig:RRU}A visual explanation of the reason why the terms in $s(Q_{\rightarrow\nearrow}^{(y)})+s(Q_{\nearrow\rightarrow}^{(y)})$
corresponds to the terms of the form $\left([b_{y+1}-b_{y}]-[b_{y}-b_{y-1}]\right)\times(\cdots e_{a_{x}b_{y-1}}e_{0}e_{a_{x+1}b_{y+1}}\cdots)$
which appear in $u*\partial v$. The first diagram is for the case $y>1$
and the second diagram is for the case $y=1$. Note that $\left([b_{y+1}-b_{y}]+[a_{x+1}]-[a_{x+1}]-[b_{y}]\right)+\left([a_{x}]+[b_{y}]-[b_{y}-b_{y-1}]-[a_{x}]\right)=[b_{y+1}-b_{y}]-[b_{y}-b_{y-1}]$
for the first diagram, and $[b_{2}-b_{1}]+[a_{1}]-[a_{1}]-[b_{1}]=[b_{2}-b_{1}]-[b_{1}]$
for the second diagram.}
\end{figure}
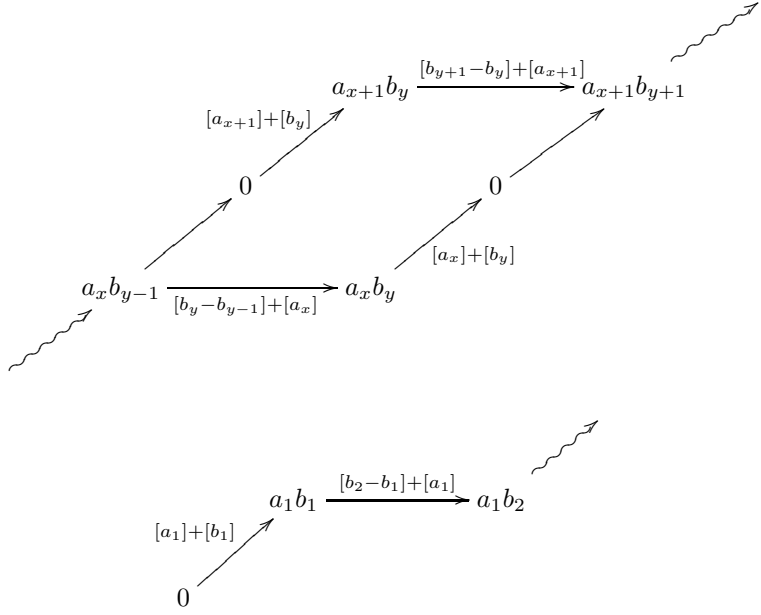

\begin{figure}[h]
\[
\xymatrix@C40pt{ &  &  &  &  &  & \ \\
 & (x,y-1)\ar_-{[b_{y}-b_{y-1}]+[a_{x}]}[rr] &  & (x,y)\ar_-{[b_{y+1}-b_{y}]+[a_{x}]}[rr] &  & (x,y+1)\ar@{~>}[ru]\\
\ \ar@{~>}[ru]
}
\]
\caption{\label{fig:RR}A visual explanation of the reason why the terms in $s(Q_{\rightarrow\rightarrow}^{(y)})$
corresponds to the terms of the form $\left([b_{y+1}-b_{y}]-[b_{y}-b_{y-1}]\right)(\cdots e_{a_{x}b_{y-1}}e_{a_{x}b_{y+1}}\cdots)$
which appear in $u*\partial v$. Note that $[b_{y+1}-b_{y}]+[a_{x}]-[b_{y}-b_{y-1}]-[a_{x}]=[b_{y+1}-b_{y}]-[b_{y}-b_{y-1}]$.}
\end{figure}
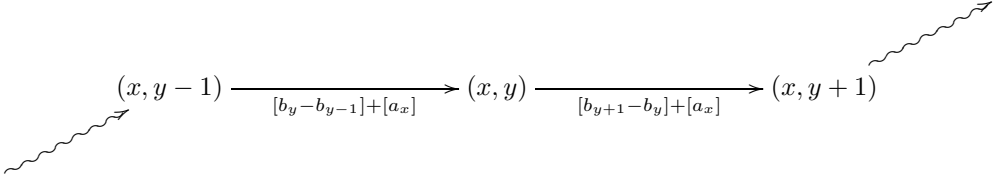
The identity (\ref{eq:nn_der}) is also checked in completely the same way.

Combining the identities (\ref{eq:stpr0}), (\ref{eq:lambda_value}) together with (\ref{eq:ne_ne_zero}), (\ref{eq:sharp_zero}), (\ref{eq:ee_der}) and (\ref{eq:nn_der}), it readily follows that
\[
\partial(u*v)=(\partial u)*v+u*\partial v+\Lambda.\qedhere
\]



\end{proof}

\section{\label{sec:Mbius-transformation}M\"{o}bius transformation}

In this section, we investigate the relation between the differential
operator $\partial^{s,t}$ and the transformation on the algebra $\mathcal{A}=\mathcal{A}_{F}$
associated to the M\"{o}bius transformation on $\mathbb{P}^{1}$.

Recall that ${\rm GL}_{2}(F)$ naturally acts on $\mathbb{P}^{1}(F)=F\sqcup\{\infty\}$
by the M\"{o}bius transformation
\[
\gamma(x)=\frac{ax+b}{cx+d}\ \ \ \left(\gamma=\left(\begin{array}{cc}
a & b\\
c & d
\end{array}\right)\right).
\]
Put $e_{\infty}:=0$. We define an automorphism $\gamma^{*}$ of $\mathcal{A}$
by%
\footnote{The motivation of the definition of $\gamma^{*}$ comes from the formula
$\frac{dt}{t-x}=\frac{ds}{s-\gamma(x)}-\frac{ds}{s-\gamma(\infty)}$,
where $s=\gamma(t)$.%
} $\gamma^{*}(e_{x})=e_{\gamma(x)}-e_{\gamma(\infty)}.$

Hereafter, we fix $\gamma=\left(\begin{array}{cc}
a & b\\
c & d
\end{array}\right)\in{\rm GL}_{2}(F)$, a starting point $s\in F$ and an endpoint $t\in F$. We assume
that $\gamma(s)\neq\infty$ and $\gamma(t)\neq\infty$. We set $\varepsilon_{\gamma}(z):=[\det\gamma]-2[cz+d]\in\mathbb{Z}$.
We shall prove the following identity%
\footnote{The motivation of this identity comes from the total differential
of M\"{o}bius transformation formula for iterated integrals.%
}:
\begin{thm}
\label{thm:lin_fra_main}For a non-constant monomial $w\in\mathcal{A}$,
\[
\left(\gamma^{-1}\right)^{*}\partial^{\gamma(s),\gamma(t)}\gamma^{*}(w)=\partial^{s,t}w+\delta_{x,s}\,\varepsilon_{\gamma}(s)w'-\delta_{y,t}\,\varepsilon_{\gamma}(t)w'',
\]
where $w=e_{x}w'=w''e_{y}$. In particular, for $w\in\mathcal{A}_{(s,t)}^{0}$,
\[
\left(\gamma^{-1}\right)^{*}\partial^{\gamma(s),\gamma(t)}\gamma^{*}(w)=\partial^{s,t}w.
\]

\end{thm}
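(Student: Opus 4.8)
The plan is to fix a monomial $w=e_{x_1}\cdots e_{x_n}$ (so $x=x_1$, $y=x_n$, $w'=e_{x_2}\cdots e_{x_n}$, $w''=e_{x_1}\cdots e_{x_{n-1}}$) and compute the left-hand side directly. The maps $\gamma^{*}$ and $(\gamma^{-1})^{*}$ are mutually inverse algebra automorphisms (one checks $(\gamma_1\gamma_2)^{*}=\gamma_1^{*}\gamma_2^{*}$, whence $(\gamma^{-1})^{*}=(\gamma^{*})^{-1}$), and the only facts I need are $(\gamma^{-1})^{*}(e_{\gamma(p)})=e_{p}-e_{r}$ and $(\gamma^{-1})^{*}(e_{\gamma(\infty)})=-e_{r}$, where $r:=\gamma^{-1}(\infty)$. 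The computational engine is the elementary identity
\[
\gamma(\alpha)-\gamma(\beta)=\frac{\det\gamma\cdot(\alpha-\beta)}{(c\alpha+d)(c\beta+d)},
\]
which, since $\mathfrak{F}$ is a homomorphism, yields $[\gamma(\alpha)-\gamma(\beta)]=[\det\gamma]+[\alpha-\beta]-[c\alpha+d]-[c\beta+d]$ for distinct finite $\alpha,\beta$ with $\gamma(\alpha),\gamma(\beta)\neq\infty$; observe that $\varepsilon_{\gamma}(z)$ is precisely the ``diagonal'' $\alpha=\beta=z$ value of this right-hand side. I would dispose of the case $c=0$ (where $\gamma$ is affine, $\gamma(\infty)=\infty$, and $\gamma^{*}(w)$ is a single monomial) separately, as it is immediate, and assume $c\neq0$ below.

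Next I would expand $\gamma^{*}(w)=\prod_{i=1}^{n}\bigl(e_{\gamma(x_i)}-e_{\gamma(\infty)}\bigr)=\sum_{\tilde x}\mathrm{sgn}(\tilde x)\,e_{\gamma(\tilde x_1)}\cdots e_{\gamma(\tilde x_n)}$, summed over all lifts $\tilde x_i\in\{x_i,\infty\}$ with $\mathrm{sgn}(\tilde x_i)=+1$ for $\tilde x_i=x_i$ and $-1$ for $\tilde x_i=\infty$, and put $\tilde x_0=s$, $\tilde x_{n+1}=t$. To each monomial I apply $\partial^{\gamma(s),\gamma(t)}$ through Lemma~\ref{lem:lift_ad} with the choice
\[
f_{\tilde x}(i)=[\det\gamma]+[\tilde x_{i+1}-\tilde x_i]-[c\tilde x_i+d]-[c\tilde x_{i+1}+d],
\]
using the conventions $[\alpha-\infty]=[\infty-\alpha]=0$ and $[c\cdot\infty+d]=[c]$. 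The difference formula above guarantees that $f_{\tilde x}(i)=[\gamma(\tilde x_{i+1})-\gamma(\tilde x_i)]$ whenever $\gamma(\tilde x_i)\neq\gamma(\tilde x_{i+1})$, so this is a legitimate choice of $f$.

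The heart of the argument is a chain of cancellations. First, for each removed position $i$ I sum over its lift $\tilde x_i$: inside $f_{\tilde x}(i)-f_{\tilde x}(i-1)$ the terms $[c\tilde x_i+d]$ cancel, the surviving $\tilde x_i$-independent part is annihilated by $\mathrm{sgn}(x_i)+\mathrm{sgn}(\infty)=0$, and what remains is the coefficient $[\tilde x_{i+1}-x_i]-[x_i-\tilde x_{i-1}]$. Summing next over the neighbouring lifts $\tilde x_{i\pm1}$ forces them back to $x_{i\pm1}$ (the $\infty$-lift dies because $[\infty-x_i]=0$), and applying $(\gamma^{-1})^{*}$ rebuilds $e_{x_1}\cdots\widehat{e_{x_i}}\cdots e_{x_n}$ with coefficient $[x_{i+1}-x_i]-[x_i-x_{i-1}]$, together with ``defect'' words in which $e_{x_i}e_{x_{i+1}}$ or $e_{x_{i-1}}e_{x_i}$ is replaced by a single $e_r$ (coming from the $-e_r$ in $(\gamma^{-1})^{*}(e_{\gamma(x_{i\pm1})})$). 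The end positions $i=1,n$ produce no defect, and the interior defects telescope and cancel completely. Finally, the boundary terms of Lemma~\ref{lem:lift_ad} survive only when $\tilde x_1=x_1=s$ (resp. $\tilde x_n=x_n=t$), where $f_{\tilde x}(0)$ (resp. $f_{\tilde x}(n)$) collapses to the diagonal value $\varepsilon_{\gamma}(s)$ (resp. $\varepsilon_{\gamma}(t)$); after $(\gamma^{-1})^{*}$ these produce exactly $+\delta_{x,s}\varepsilon_{\gamma}(s)w'-\delta_{y,t}\varepsilon_{\gamma}(t)w''$, and the ``in particular'' statement follows since $\delta_{x,s}=\delta_{y,t}=0$ for $w\in\mathcal{A}_{(s,t)}^{0}$.

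The step I expect to be most delicate is the treatment of letters with $x_i=r=\gamma^{-1}(\infty)$, for which $e_{\gamma(x_i)}=e_{\infty}=0$: then the monomials with $\tilde x_i=x_i$ vanish and both the naive bookkeeping and the difference formula break down. The resolution is the relation $[c\xi+d]=[c]+[\xi-r]$ (valid for $c\neq0$, since $c\xi+d=c(\xi-r)$). It shows that on the offending lift $\tilde x_i=r$ the coefficient $f_{\tilde x}(i)-f_{\tilde x}(i-1)$ is identically zero, so that keeping these lifts in the uniform sum is harmless and consistent with $e_\infty=0$ (no spurious terms survive); the same relation reconciles the $[c\,\cdot+d]$-type coefficient produced at such a position with the expected value $[x_{i+1}-x_i]-[x_i-x_{i-1}]$, and one checks the defect words still telescope as before. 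Carrying this bookkeeping through so that the degenerate letters are absorbed without a separate case analysis is the main technical point of the proof.
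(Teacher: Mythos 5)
Your argument is correct and is essentially the proof in the paper: your signed lifts are the paper's paths through the two-level graph, your $f_{\tilde x}$ is the paper's edge-labelling $\lambda$ built from the same M\"{o}bius difference formula and fed into Lemma \ref{lem:lift_ad}, your sign-cancellation over the two lifts of the deleted letter is the paper's computation of $\sum_{j\in\{0,1\}}(-1)^{j}\left(\lambda((m,j),(m+1,k))-\lambda((m-1,i),(m,j))\right)$, and your handling of letters with $x_i=\gamma^{-1}(\infty)$ is the paper's analysis of the sets $Q'$ and $Q''$. The only organizational difference is the endgame: the paper adjusts $\lambda$ on the $\infty$-to-$\infty$ edges (the extra $-[z_{m+1}-z_{m}]$) so that $\partial^{\gamma(s),\gamma(t)}\gamma^{*}(w)$ is visibly $\gamma^{*}$ of the right-hand side and $(\gamma^{-1})^{*}$ is applied in one stroke, whereas you apply $(\gamma^{-1})^{*}$ letter by letter and cancel the resulting $e_{\gamma^{-1}(\infty)}$ ``defect'' words by telescoping --- a step that does close exactly as you describe.
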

In the following, we give a proof of this theorem. We put $w=e_{z_{1}}\cdots e_{z_{n}}\in\mathcal{A}_{(s,t)}^{0}$
and $(z_{0},z_{n+1})=(s,t)$. We define a directed graph $G=(V,E)$
whose vertex set $V$ is given by
\[
V=\{(0,0)\}\sqcup\{1,\dots,n\}\times\{0,1\}\sqcup\{(n+1,0)\}
\]
and whose edge set $E$ is given by
\[
E=\{((m_{1},i_{1}),(m_{2},i_{2}))\in V^{2}\mid m_{2}=m_{1}+1\}.
\]
Define a labeling $f$ and a sign ${\rm sgn}:V\to\{\pm1\}$ of vertices
by
\[
f:V\to\mathbb{P}^{1}(F)\ ;\ (n,i)\mapsto\begin{cases}
\gamma(z_{n}) & i=0\\
\gamma(\infty) & i=1,
\end{cases}
\]
and by ${\rm sgn}((m,i))=(-1)^{i}$. For example, if $n=3$, the labeling
and the sign of the graph is as follows:
\[
\xymatrix{(\gamma(s),+1)\ar[r]\ar[rd] & (\gamma(z_{1}),+1)\ar[r]\ar[rd] & (\gamma(z_{2}),+1)\ar[r]\ar[rd] & (\gamma(z_{3}),+1)\ar[r] & (\gamma(t),+1)\\
 & (\gamma(\infty),-1)\ar[r]\ar[ru] & (\gamma(\infty),-1)\ar[r]\ar[ru] & (\gamma(\infty),-1)\ar[ru]
}
\]
As before, we define the set of paths by
\[
P:=\left\{ \path{p}=\left(p_{0},\ldots,p_{n+1}\right)\in V^{n+2}\left|\,\begin{array}{c}
p_{0}=\left(0,0\right),\\
p_{n+1}=(n+1,0),
\end{array}\begin{array}{c}
p_{m}\in\left\{ (m,0),(m,1)\right\} \\
\mbox{ for }1\leq m\leq n
\end{array}\right.\right\} ,
\]
the subset of ``effective'' paths by
\begin{align*}
P^{\mathrm{eff}}: & =\left\{ \path{p}\in P\left|\, f(p_{m})\neq\infty\,\mbox{ for }1\leq m\leq n\right.\right\}
\end{align*}
and ${\rm sgn}:P\rightarrow\{\pm1\}$ by $\mathrm{sgn}(\path{p})=\prod_{i=1}^{n}\mathrm{sgn}(p_{i})$.
Then, $\gamma^{*}(w)$ is expressed as
\begin{align*}
\gamma^{*}(w) & =\sum_{\path{p}\in P}{\rm sgn}(\path{p})\, e_{f(p_{1})}\cdots e_{f(p_{n})}=\sum_{\path{p}\in P^{\mathrm{eff}}}{\rm sgn}(\path{p})\, e_{f(p_{1})}\cdots e_{f(p_{n})}.
\end{align*}
Now, define a map $\lambda:E\to\mathbb{Z}$ by
\begin{align*}
 & \lambda(v_{1},v_{2}):=\\
 & \begin{cases}
[ad-bc]+[z_{m+1}-z_{m}]-[cz_{m+1}+d]-[cz_{m}+d] & \mbox{ if }(v_{1},v_{2})=((m,0),(m+1,0))\\
{}[ad-bc]-[c]-[cz_{m}+d] & \mbox{ if }(v_{1},v_{2})=((m,0),(m+1,1))\\
{}[ad-bc]-[c]-[cz_{m+1}+d] & \mbox{ if }(v_{1},v_{2})=((m,1),(m+1,0))\\
{}[ad-bc]-2[c]-[z_{m+1}-z_{m}] & \mbox{ if }(v_{1},v_{2})=((m,1),(m+1,1))
\end{cases}
\end{align*}
for $m\in\{0,\ldots,n\}$. Then, by direct calculations, we have
\begin{equation}
[f(v_{1})-f(v_{2})]=\lambda(v_{1},v_{2})\label{eq:lambda}
\end{equation}
for $(v_{1},v_{2})\in E$ such that $f(v_{1})\neq\infty$, $f(v_{2})\neq\infty$
and $f(v_{1})\neq f(v_{2})$. Set $Q=P\times\{1,\ldots,n\}$ and $Q^{\mathrm{eff}}=P^{\mathrm{eff}}\times\{1,\ldots,n\}$
and
\[
s(\path{p},m):=(\lambda(p_{m},p_{m+1})-\lambda(p_{m-1},p_{m}))(\prod_{\substack{1\leq i\leq n\\
i\neq m
}
}{\rm sgn}(p_{i}))\, e_{f(p_{1})}\cdots\widehat{e_{f(p_{m})}}\cdots e_{f(p_{n})}
\]
for $(\path{p},m)\in Q$. Then, by (\ref{eq:lambda}) and Lemma \ref{lem:lift_ad},
we get
\begin{align*}
\partial^{\gamma(s),\gamma(t)}(\gamma^{*}(w)) & =s(Q^{\mathrm{eff}})+\Lambda,
\end{align*}
where
\begin{align*}
\Lambda:= & \:\delta_{z_{1},s}\,\varepsilon_{\gamma}(s)\sum_{\substack{\path{p}\in P^{{\rm eff}}\\
p_{1}=(1,0)
}
}{\rm sgn}(\path{p})\, e_{f(p_{2})}\cdots e_{f(p_{n})}\\
 & \:-\delta_{z_{n},t}\,\varepsilon_{\gamma}(t)\sum_{\substack{\path{p}\in P^{{\rm eff}}\\
p_{n}=(n,0)
}
}{\rm sgn}(\path{p})\, e_{f(p_{1})}\cdots e_{f(p_{n-1})}\\
= & \:\gamma^{*}\left(\delta_{z_{1},s}\,\varepsilon_{\gamma}(s)w'-\delta_{z_{n},t}\,\varepsilon_{\gamma}(t)w''\right)\ \ \ (w=e_{z_{1}}w'=w''e_{z_{n}}).
\end{align*}
By setting
\begin{eqnarray*}
Q' & = & \left\{ \left.(\path{p},m)\in Q\right|\exists i\neq m\mbox{ such that }f(p_{i})=\infty\right\} ,\\
Q'' & = & \left\{ \left.(\path{p},m)\in Q\right|f(p_{m})=\infty,f(p_{m-1}),f(p_{m+1})\neq\infty\right\} ,
\end{eqnarray*}
we have $Q=Q^{\mathrm{eff}}\cup Q'\cup Q''$. Also, we can check by
direct calculations that
\[
\lambda(v_{1},v_{2})=\lambda(v_{2},v_{3})
\]
for $v_{1},v_{2},v_{3}\in V$ such that $(v_{1},v_{2}),(v_{2},v_{3})\in E$
and $f(v_{1}),f(v_{3})\neq\infty$, $f(v_{2})=\infty$. Thus, $s(\path{p},m)=0$
for $(\path{p},m)\in Q''$. Together with the trivial fact that $e_{f(p_{1})}\cdots\widehat{e_{f(p_{m})}}\cdots e_{f(p_{n})}=0$
for $(\path{p},m)\in Q'$, we find that $s(\path{p},m)=0$ for $(\path{p},m)\in Q'\cup Q''$
and hence
\[
s(Q^{\mathrm{eff}})=s(Q).
\]
Again, by direct calculations,
\begin{align*}
 & \sum_{j\in\{0,1\}}(-1)^{j}\left(\lambda((m,j),(m+1,k))-\lambda((m-1,i),(m,j))\right)\\
 & =[z_{m+1}-z_{m}]-[z_{m}-z_{m-1}]\qquad(i,k\in\{0,1\}).
\end{align*}
Hence, we have
\begin{equation}
s(Q)=\sum_{m=1}^{n}([z_{m+1}-z_{m}]-[z_{m}-z_{m-1}])\sum_{\path{p}\in P_{m}}e_{f(p_{1})}\cdots\widehat{e_{f(p_{m})}}\cdots e_{f(p_{n})}\prod_{\substack{1\leq i\leq n\\
i\neq m
}
}{\rm sgn}(p_{i}),\label{eq:linpr3}
\end{equation}
where $P_{m}$ is a coset of $P$ by the equivalence relation $(p_{0},\ldots,(m,0),\ldots,p_{n+1})\sim(p_{0},\ldots,(m,1),\ldots,p_{n+1})$.
Since the right-hand side of (\ref{eq:linpr3}) is $\gamma^{*}(\partial^{s,t}w)$
by definition, this proves Theorem \ref{thm:lin_fra_main}.

\section{\label{sec:An-application-to01z}An application to iterated integrals
on $\mathbb{P}^{1}\setminus\{0,1,\infty,z\}$}

In this section, we consider the special case $F=\mathbb{Q}(z)$. We
put $\mathcal{A}_{\{0,1,z\}}=\mathbb{Z}\left\langle e_{0},e_{1},e_{z}\right\rangle \subset\mathcal{A}_{F}$
and $\mathcal{A}_{\{0,1\}}=\mathbb{Z}\left\langle e_{0},e_{1}\right\rangle \subset\mathcal{A}_{F}$.
Then, by definition, we have
\begin{align*}
\partial u & \in\mathcal{A}_{\{0,1,z\}}\ \ \ (u\in\mathcal{A}_{\{0,1,z\}}),\\
u\shuffle v & \in\mathcal{A}_{\{0,1,z\}}\ \ \ (u,v\in\mathcal{A}_{\{0,1,z\}}),\\
u*v & \in\mathcal{A}_{\{0,1,z\}}\ \ \ (u\in\mathcal{A}_{\{0,1,z\}},v\in\mathcal{A}_{\{0,1\}}).
\end{align*}
Recall the linear operator $\partial_{z,c}:\mathcal{A}_{F}\to\mathcal{A}_{F}$
introduced in (\ref{eq:der_intro}) i.e.,
\[
\partial_{z,c}(e_{a_{1}}\cdots e_{a_{n}}):=\sum_{i=1}^{n}\left(\delta_{\{a_{i},a_{i+1}\},\{z,c\}}-\delta_{\{a_{i-1},a_{i}\},\{z,c\}}\right)e_{a_{1}}\cdots\widehat{e_{a_{i}}}\cdots e_{a_{n}},
\]
where $(a_{0},a_{n+1})=(0,1)$. Note that $\partial_{z,c}=\partial_{\mathfrak{F}_{c}}^{0,1}$,
where $\mathfrak{F}_{c}:F^{\times}\to\mathbb{Z}$ is any homomorphism
satisfying
\[
\mathfrak{F}_{c}(z-c)=1,\ \mathfrak{F}_{c}(z-(1-c))=0.
\]
Let $\tau_{z}:\mathcal{A}_{\{0,1,z\}}\to\mathcal{A}_{\{0,1,z\}}$
be the anti-automorphism (i.e., $\tau_{z}(uv)=\tau_{z}(v)\tau_{z}(u)$
for $u,v\in\mathcal{A}_{\{0,1,z\}}$) defined by
\[
\tau_{z}(e_{0})=e_{z}-e_{1},\ \tau_{z}(e_{1})=e_{z}-e_{0},\ \tau_{z}(e_{z})=e_{z}.
\]
We also put $\mathcal{A}_{\{0,1\}}^{1}:=\mathcal{A}_{\{0,1\}}\cap\mathcal{A}_{F}^{1}$
and $\mathcal{A}_{\{0,1,z\}}^{0}:=\mathcal{A}_{\{0,1,z\}}\cap\mathcal{A}_{F}^{0}$.
\begin{thm}
\label{thm:Three_applications}For $c\in\{0,1\}$, we have the following
formulas.
\begin{enumerate}
\item For $u,v\in\mathcal{A}_{\{0,1,z\}}$,
\[
\partial_{z,c}(u\shuffle v)=(\partial_{z,c}u)\shuffle v+u\shuffle(\partial_{z,c}v).
\]

\item For non-constant monomials $u\in\mathcal{A}_{\{0,1\}}$ and $v\in\mathcal{A}_{\{0,1,z\}}$,
\[
\partial_{z,c}(u*v)=u*(\partial_{z,c}v)+\delta_{a,0}\delta_{b,z}\delta_{c,0}\, u'*v,
\]
where $u=e_{a}u',v=e_{b}v'$. In particular, for $u\in\mathcal{A}_{\{0,1\}}^{1}$
and $v\in\mathcal{A}_{\{0,1,z\}}$,
\[
\partial_{z,c}(u*v)=u*(\partial_{z,c}v).
\]

\item For a non-constant monomial $u\in\mathcal{A}_{\{0,1,z\}}$,
\[
\tau_{z}^{-1}\circ\partial_{z,c}\circ\tau_{z}(u)=\partial_{z,c}u+(\delta_{c,1}-\delta_{c,0})(\delta_{a,0}u'+\delta_{b,1}u''),
\]
where $u=e_{a}u'=u''e_{b}.$ In particular, for $u\in\mathcal{A}_{\{0,1,z\}}^{0}$,
\[
\tau_{z}^{-1}\circ\partial_{z,c}\circ\tau_{z}(u)=\partial_{z,c}u.
\]

\end{enumerate}
\end{thm}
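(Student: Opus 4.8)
The unifying observation is that $\partial_{z,c}$ is exactly the special case $\partial^{0,1}_{\mathfrak{F}_c}$ of the operator introduced in Section \ref{sec:Basic-settings}, so all three formulas should fall out of Theorems \ref{thm:shuffle}, \ref{thm:stuffle} and \ref{thm:lin_fra_main} once the values of $[\,\cdot\,]$ attached to $\mathfrak{F}_c$ are recorded. My first step is to establish this scalar table. Since the arguments that can occur are the differences of elements of $\{0,1,z\}$, and using $[\pm x]=[x]$ together with $[1]=0$, the defining relations $\mathfrak{F}_c(z-c)=1$ and $\mathfrak{F}_c(z-(1-c))=0$ give $[0]=[1]=0$, $[z]=\delta_{c,0}$ and $[z-1]=\delta_{c,1}$. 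This is precisely what converts the abstract coefficients $[\,\cdot\,]$ of the general theorems into the Kronecker deltas appearing in the statement.

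With this in hand, part (1) is immediate: Theorem \ref{thm:shuffle} holds for the fixed homomorphism $\mathfrak{F}_c$, hence for $\partial=\partial_{z,c}$, and since $\mathcal{A}_{\{0,1,z\}}$ is closed under $\shuffle$ and under $\partial_{z,c}$, restriction gives the claim. For part (2) I would apply Theorem \ref{thm:stuffle} with $\partial=\partial_{z,c}$. Because $u\in\mathcal{A}_{\{0,1\}}$ contains no $e_z$, no consecutive pair can equal $\{z,c\}$, so $\partial_{z,c}u=0$ and the term $(\partial u)*v$ drops out. The two boundary corrections $\delta_{a,0}[b](u'*v)+\delta_{b,0}[a](u*v')$ are then evaluated with the scalar table: $[a]=0$ since $a\in\{0,1\}$, while $[b]=\delta_{b,z}\delta_{c,0}$, leaving exactly $\delta_{a,0}\delta_{b,z}\delta_{c,0}\,(u'*v)$. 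The ``in particular'' case follows since $u\in\mathcal{A}_{\{0,1\}}^{1}$ forces $a=1$.

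Part (3) is the substantial one, and the plan is to realize $\tau_z$ through the M\"obius machinery. Let $\gamma(x)=\frac{z(x-1)}{x-z}$, i.e. $\gamma=\left(\begin{smallmatrix}z&-z\\1&-z\end{smallmatrix}\right)$, so that $\gamma(0)=1$, $\gamma(1)=0$, $\gamma(z)=\infty$ and $\gamma(\infty)=z$. A direct check on generators gives $\gamma^{*}(e_a)=-\tau_z(e_a)$ for $a\in\{0,1,z\}$ (e.g.\ $\gamma^{*}(e_0)=e_1-e_z=-(e_z-e_1)$); tracking that $\gamma^{*}$ is an automorphism while $\tau_z$ is an anti-automorphism then upgrades this to the operator identity $\tau_z=\eta\circ R\circ\gamma^{*}$, where $R$ is the reversal anti-automorphism and $\eta$ the sign automorphism determined by $e_a\mapsto -e_a$. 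I would then conjugate. Using the two elementary relations $\eta\,\partial^{s,t}\,\eta=-\partial^{s,t}$ (as $\partial^{s,t}$ lowers degree by one) and $R\,\partial^{s,t}\,R=-\partial^{t,s}$ (from $[-x]=[x]$ upon reversing indices), the signs cancel and $\tau_z^{-1}\partial_{z,c}\tau_z$ collapses to $(\gamma^{-1})^{*}\partial^{1,0}\gamma^{*}$. Applying Theorem \ref{thm:lin_fra_main} with $(s,t)=(0,1)$ (legitimate since $\gamma(0),\gamma(1)\neq\infty$) and $\mathfrak{F}=\mathfrak{F}_c$ gives $\partial_{z,c}u+\delta_{a,0}\,\varepsilon_\gamma(0)\,u'-\delta_{b,1}\,\varepsilon_\gamma(1)\,u''$ for $u=e_au'=u''e_b$. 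Finally I compute $\varepsilon_\gamma$: from $\det\gamma=z(1-z)$ one gets $[\det\gamma]=[z]+[z-1]=\delta_{c,0}+\delta_{c,1}=1$, whence $\varepsilon_\gamma(0)=1-2[z]=\delta_{c,1}-\delta_{c,0}$ and $\varepsilon_\gamma(1)=1-2[z-1]=\delta_{c,0}-\delta_{c,1}$, which reassemble into $(\delta_{c,1}-\delta_{c,0})(\delta_{a,0}u'+\delta_{b,1}u'')$, as claimed.

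The main obstacle is the bookkeeping in part (3): pinning down the correct $\gamma$, and then propagating the reversal $R$ and the degree sign $\eta$ through the conjugation without dropping a sign, since the final correction term is a difference of Kronecker deltas and is entirely sign-sensitive. By contrast, parts (1) and (2) are routine specializations once the scalar table for $\mathfrak{F}_c$ is in place.
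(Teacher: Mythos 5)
Your proposal is correct and follows essentially the same route as the paper: parts (1) and (2) are direct specializations of Theorems \ref{thm:shuffle} and \ref{thm:stuffle} via the scalar table $[0]=[1]=0$, $[z]=\delta_{c,0}$, $[z-1]=\delta_{c,1}$, and part (3) uses the same factorization $\tau_z=\varphi\circ\gamma_z^{*}$ with $\gamma_z=\left(\begin{smallmatrix}z&-z\\1&-z\end{smallmatrix}\right)$ and the intertwining $\varphi\circ\partial^{x,y}=\partial^{y,x}\circ\varphi$ (which you obtain by splitting $\varphi=\eta\circ R$) before invoking Theorem \ref{thm:lin_fra_main}. Your computation of $\varepsilon_\gamma(0)=\delta_{c,1}-\delta_{c,0}$ and $\varepsilon_\gamma(1)=\delta_{c,0}-\delta_{c,1}$ correctly supplies the detail the paper leaves implicit.
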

\begin{proof}
(1) and (2) of Theorem \ref{thm:Three_applications} follow from Theorem
\ref{thm:shuffle} and \ref{thm:stuffle}. Let $\varphi:\mathcal{A}_{F}\to\mathcal{A}_{F}$
be an anti-automorphism defined by
\[
\varphi(e_{x})=-e_{x}\ \ \ (x\in F).
\]
Then, we have
\[
\varphi\circ\partial^{x,y}=\partial^{y,x}\circ\varphi\qquad(x,y\in F).
\]
Since $\tau_{z}=\varphi\circ\gamma_{z}^{*}$ with $\gamma_{z}=\left(\begin{array}{cc}
z & -z\\
1 & -z
\end{array}\right)\in{\rm GL}_{2}(F)$, (3) of Theorem \ref{thm:Three_applications} follows from Theorem
\ref{thm:lin_fra_main}.
\end{proof}

\section*{Acknowledgements}

This work was supported by JSPS KAKENHI Grant Numbers JP18J00982, JP18K13392 and by a Postdoctoral fellowship at the National Center for Theoretical Sciences.

The authors would like to thank Erik Panzer for some useful comments
on a draft of this paper, and Naho Kawasaki for her careful reading and kindly pointing out some errors in the paper.

\bibliographystyle{plain}
\bibliography{memo}

\end{document}